\documentclass[11pt]{amsart}
\usepackage[a4paper,margin=2.5cm]{geometry}
\usepackage{hyperref,amsmath,amssymb,color,tikz-cd}
\usepackage{diagbox, arydshln}
\usepackage{nicematrix}

\theoremstyle{plain}
\newtheorem{theorem}{Theorem}[section]
\newtheorem{proposition}[theorem]{Proposition}
\newtheorem{lemma}[theorem]{Lemma}
\newtheorem{corollary}[theorem]{Corollary}
\numberwithin{equation}{section}

\theoremstyle{definition}
\newtheorem{definition}[theorem]{Definition}
\newtheorem{remark}[theorem]{Remark}

\newcommand{\Z}{\mathbb{Z}}

\newcommand{\R}{\mathbb{R}}

\newcommand{\cC}{\mathcal{C}}
\newcommand{\cR}{\mathcal{R}}
\newcommand{\cA}{\mathcal{A}}
\newcommand{\RP}{\mathbb{R}P}
\newcommand{\RZ}{\mathbb{R}\mathcal{Z}}
\newcommand{\cZ}{\mathcal{Z}}

\DeclareMathOperator{\GL}{GL}
\DeclareMathOperator{\Lk}{Lk}
\DeclareMathOperator{\row}{row}
\DeclareMathOperator{\Bier}{Bier}
\DeclareMathOperator{\Aut}{Aut}
\DeclareMathOperator{\ind}{\mathbf{1}}

\begin{document}
\title[On small covers over Bier spheres]{On small covers over Bier spheres}

\author[S. Choi]{Suyoung Choi}
\address{Department of mathematics, Ajou University, 206, World cup-ro, Yeongtong-gu, Suwon 16499,  Republic of Korea}
\email{schoi@ajou.ac.kr}

\author[Y. Yoon]{Younghan Yoon}
\address{Research Institute of Basic Sciences, Ajou University, 206, World cup-ro, Yeongtong-gu, Suwon 16499, Republic of Korea}
\email{younghan300@ajou.ac.kr}

\author[S. Yu]{Seonghyeon Yu}
\address{Department of mathematics, Ajou University, 206, World cup-ro, Yeongtong-gu, Suwon 16499,  Republic of Korea}
\email{yoosh0319@ajou.ac.kr}

\date{\today}
\subjclass[2020]{57S12, 55U10, 52B05, 57N65, 05E45}

\keywords{Bier spheres, small covers, Davis--Januszkiewicz equivalence, characteristic matrices, homeomorphism types, Betti numbers}

\thanks{This work was supported by the National Research Foundation of Korea Grant funded by the Korean Government (RS-2025-00521982).}

\begin{abstract}
    The Bier sphere of a simplicial complex~$K$ is defined as the deleted join of~$K$ and its combinatorial Alexander dual.
    We focus on the class of Bier spheres of the skeleta of a simplex.
    Since these Bier spheres are known to be polytopal, they give rise to small covers.
    We classify small covers over these Bier spheres up to Davis--Januszkiewicz equivalence.
     
    As applications, for all~$m \geq 4$, we determine the homeomorphism types of small covers over the Bier spheres of the~$0$-skeleton and the~$(m-3)$-skeleton of an~$(m-1)$-simplex.
    For the remaining cases~$0<r<m-3$, we compute their rational Betti numbers.
\end{abstract}
\maketitle

\section{Introduction} \label{sec:intro}
Let~$P^n$ be an~$n$-dimensional simple convex polytope.
A \emph{small cover} over~$P^n$, introduced by Davis and Januszkiewicz~\cite{Davis-Januszkiewicz1991}, is an~$n$-dimensional closed smooth manifold~$M$ equipped with a locally standard~$\Z_2^n$-action together with a projection~$\pi\colon M \to P^n$ such that the orbit space~$M/\Z_2^n$ is identified with~$P^n$.
Two small covers~$\pi_1 \colon M_1 \to P^n$ and~$\pi_2 \colon M_2 \to P^n$ are said to be \emph{Davis--Januszkiewicz equivalent} (or, simply, \emph{D--J equivalent}) if there exist a group automorphism~$\theta\in \Aut(\Z_2^n)$ and a homeomorphism~$f \colon M_1 \to M_2$ such that~$f(t\cdot x)=\theta(t)\cdot f(x)$ for all~$t\in \Z_2^n$ and~$x\in M_1,$ and the diagram
$$
    \begin{tikzcd}
        M_1 \arrow[rr,"f"] \arrow[dr,"\pi_1"'] & & M_2 \arrow[dl,"\pi_2"] \\
        & P^n &
    \end{tikzcd}
$$
commutes.
The problem of identifying all small covers has been studied for various specific classes of simple polytopes~\cite{Garrison-Scott2003, Cai-Chen-Lu2007, Choi2008, Choi-Masuda-Suh2010Quasitoric, Choi2010, Wang-Chen2012, Hasui2015, Choi-Park2019SmallCovers}.

Assume that~$m$ is a positive integer. 
Let~$[m] = \{1, \ldots, m\}$ be the standard vertex set and~$[\overline{m}] = \{\overline{1}, \ldots, \overline{m}\}$ its disjoint copy.
A \emph{Bier sphere}, introduced by Bier in~\cite{Bier1992}, arises from a simplicial complex and its Alexander dual as follows.
For a simplicial complex~$K\subsetneq 2^{[m]}$, its \emph{Alexander dual}~$\widehat{K}$ is the simplicial complex on~$[\overline{m}]$ consisting of the faces~$\overline{\sigma}$ such that~$[m]\setminus\sigma$ is not a face of~$K$.
By taking the deleted join of~$K$ and~$\widehat{K}$, we obtain the Bier sphere~$\Bier K$ of~$K$, which is an~$(m-2)$-dimensional simplicial sphere on (up to)~$2m$ vertices.
One of the basic structural features of Bier spheres is that every Bier sphere is shellable, although it need not be polytopal~\cite{Bjorner-Paffenholz-Sjostrand-Ziegler2005}.
More recently, it was shown that every Bier sphere carries a canonical complete nonsingular fan~\cite{Ivan-Marinko-Rade2025}.
Beyond these results, various combinatorial and topological properties of Bier spheres have been studied; see, for example,~\cite{Murai2011,Heudtlass-Katthan2012,Jevtic-Timotijevic-Zivaljevic2021,Limonchenko-Sergeev2024,Limonchenko-Vavpeti2024,Choi-Yoon-Yu2026}.

We denote by~$\Delta_r^{m-1}$ the~$r$th skeleton of the simplex with~$m$ vertices.
Among Bier spheres, the family associated with~$\Delta_r^{m-1}$ occupies a particularly distinguished position.
On the one hand, these examples are sufficiently symmetric and rigid to allow explicit combinatorial and topological computations.  
On the other hand, they are rich enough to exhibit several phenomena that also occur for general Bier spheres.  
In this sense, they serve as a useful testing ground for the broader theory.
For instance, the Betti numbers of full subcomplexes of~$\Bier\Delta_r^{m-1}$ were computed in~\cite{Heudtlass-Katthan2012}, and this direction was later extended to homotopy type descriptions for full subcomplexes of arbitrary Bier spheres in~\cite{Choi-Yoon-Yu2026}.  
Moreover, this family appears naturally in the classical proof of the Van Kampen--Flores theorem, where the combinatorics of the skeleta of a simplex plays a central role; see~\cite{Matousek2003book}.

In this paper, we focus on the Bier spheres~$\Bier\Delta_{r}^{m-1}$.
Since these Bier spheres are known to be polytopal~\cite{Jevtic-Timotijevic-Zivaljevic2021}, we study small covers over them and classify these small covers up to D--J equivalence.

Section~\ref{sec3} presents Theorem~\ref{thm:r=0_sc}, which gives a more detailed version of Theorem~\ref{thm:main1} by providing explicit representatives, while Section~\ref{sec:proof} contains its proof.
\begin{theorem}\label{thm:main1}
    Let~$m\geq 4$ and~$0\leq r\leq m-3$.
    The number of small covers over~$\Bier\Delta_{r}^{m-1}$ up to D--J equivalence is given by
    $$
        \begin{cases}
            1+m \cdot 2^{m-2}, & \text{if } r = 0 \text{ or } r=m-3,\\
            1+m, & \text{if } 0 < r < m-3.
        \end{cases}
    $$
\end{theorem}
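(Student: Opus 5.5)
We will use the standard Davis--Januszkiewicz correspondence. Small covers over a simple polytope $P^n$, up to D--J equivalence, are in bijection with characteristic functions $\lambda\colon V\to\Z_2^n$ modulo the left action of $\GL(n,\Z_2)$. Here $V$ is the vertex set of the dual simplicial sphere, and $\lambda$ must send the vertices of every facet to a basis. For $K=\Delta_r^{m-1}$ the sphere $\Bier K$ has dimension $m-2$, so $n=m-1$ and the vertex set is $[m]\sqcup[\overline m]$; for $0\le r\le m-3$ all $2m$ vertices are present.

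\textbf{Step 1: the facets.} A face $\sigma\sqcup\overline\tau$ requires $|\sigma|\le r+1$ and $|\tau|\le m-r-2$, since $\overline\tau\in\widehat K$ iff $|[m]\setminus\tau|\ge r+2$. Facets have $|\sigma|+|\tau|=m-1$, so the facets are exactly the sets
$$\sigma\sqcup\overline\tau,\qquad \sigma\sqcup\tau=[m]\setminus\{i\},\quad |\sigma|=r+1 .$$
The characteristic condition therefore reads as follows. For every $i$, and every way of choosing, for each $j\neq i$, either $\lambda(j)$ or $\lambda(\overline j)$ with exactly $r+1$ unbarred choices, the chosen $m-1$ vectors form a basis of $\Z_2^{m-1}$.

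\textbf{Step 2: reductions.} We use the isomorphism $\Bier\Delta_r^{m-1}\cong\Bier\Delta_{m-3-r}^{m-1}$, obtained from $\widehat{\Delta_r^{m-1}}\cong\Delta_{m-3-r}^{m-1}$ and swapping $j\leftrightarrow\overline j$. This lets us treat $r=0$ and $r=m-3$ together, and it suffices to take $r\le (m-3)/2$. We normalize one facet, say $\{1,\dots,r+1\}\sqcup\{\overline{r+2},\dots,\overline{m-1}\}$, to the standard basis using $\GL(m-1,\Z_2)$. The remaining freedom is then eliminated: after this normalization, characteristic matrices correspond bijectively to D--J classes. It remains to determine the three unknowns $\lambda(m)$, $\lambda(\overline m)$ and the vectors $\lambda(\overline j)$ for $j\le r+1$, $\lambda(j)$ for $r+2\le j\le m-1$.

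\textbf{Step 3: local exchanges.} Compare two facets with the same missing $i$ that differ by exchanging one unbarred $j\in\sigma$ with one barred $\overline k\in\overline\tau$, i.e.\ $\sigma'=\sigma-j+k$. Each such exchange forces a coordinate of the unknown vector to be $1$. The resulting constraints say that the $2\times 2$ ``exchange minors'' of the pair $(\lambda(j),\lambda(\overline j))$ against $(\lambda(k),\lambda(\overline k))$ are nonsingular. From these we derive the key structural lemma. When $0<r<m-3$, both $\sigma$ and $\tau$ have size at least $2$ in some facet, so every exchange can be performed inside many facets, and we expect this to force
$$\lambda(\overline j)=\lambda(j)+v \quad\text{for all } j,$$
with a common vector $v$. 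Substituting back into the full facet conditions then gives $m+1$ solutions up to normalization. These should be one symmetric solution together with $m$ solutions distinguished by an index $i\in[m]$, where the vertex pair $\{i,\overline i\}$ is treated specially.

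When $r=0$, each facet contains a single unbarred vertex. The constraints then decouple: each $\lambda(\overline j)$ need only satisfy conditions against one unbarred vertex at a time. This leaves, for each special index $i$, a free choice of $m-2$ independent bits, giving $m\cdot 2^{m-2}$ non-generic solutions plus one generic one.

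\textbf{Main obstacle.} The hard step is the exhaustive case analysis in Step 3. We must show that the solution list is complete and that distinct solutions are not identified under the residual $\GL$-action. To handle this, we first fix the missing index $i=m$ and solve the conditions on $\Z_2^{m-1}$ coordinatewise. We then use the facets missing other indices $i$ to propagate the constraints to $\lambda(m)$ and $\lambda(\overline m)$. For $r=0$ we count directly, verifying independence of the $m-2$ free bits by exhibiting, for each admissible bit pattern, a basis check on all facets.
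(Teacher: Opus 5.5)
\textbf{Where the gap is.} Your Steps~1 and~2 are correct and match the paper. The facets are exactly the sets $\sigma\sqcup\overline\tau$ with $\sigma\sqcup\tau=[m]\setminus\{i\}$ and $|\sigma|=r+1$; this is the characterization behind Proposition~\ref{prop:minor}. Normalizing one facet to the identity uses up the whole $\GL(m-1,\Z_2)$-action, so D--J classes correspond bijectively to admissible $\cA$ in~\eqref{eq:mat_form}.

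The gap is Step~3, which is the entire content of the theorem and which you do not carry out. Completeness is only ``expected'', and the counts $1+m$ and $1+m\cdot 2^{m-2}$ are read off from the desired answer.

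\textbf{Your structural lemma is false.} For $r>0$, $\Lambda(A_{(1,\ldots,1)})$ is characteristic, yet $\lambda(\overline j)+\lambda(j)=(1,\ldots,1)^T$ for $j\le m-1$ while $\lambda(\overline m)=\lambda(m)$. For $B^s_{(0,\ldots,0)}$, and for $C^t_v$ with $v_t=1$ and $v_i=0$ otherwise, the difference is the coordinate vector of row $s$, resp.\ row $\overline t$, for every $j$ except $j=s$, resp.\ $j=t$, where it is $0$. In fact, suppose $\lambda(\overline j)=\lambda(j)+v$ held for \emph{all} $j$. Expand each facet determinant multilinearly in $v$; terms with two copies of $v$ vanish. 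For a fixed missing index $i$ this shows that all determinants with one $\lambda(k)$ replaced by $v$ coincide. Running over all $i$ then forces $v=0$. So your lemma would leave only $\Lambda(A_0)$, contradicting the $m+1$ solutions you announce, with a special pair $\{i,\overline i\}$.

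What the listed solutions actually satisfy is a dichotomy. Either $\lambda(\overline j)=\lambda(j)$ for all $j$, or there is a unique $\nu$ with $\lambda(\overline j)=\lambda(j)+\lambda(\overline\nu)$ for all $j\ne\nu$. In the second case $\lambda(\nu)=\lambda(\overline\nu)$ is forced when $r>0$, whereas for $r=0$ the vector $\lambda(\nu)$ is free off a hyperplane. Proving this dichotomy is exactly the missing work.

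\textbf{Your route to it does not work as stated.} Relative to the normalized facet, a two-element exchange $(\{s,\overline t\},\{t,\overline s\})$ gives only the quadratic relation~(3) of Table~\ref{tab:minor-cal-relations}, not a coordinate equal to $1$. Single entries are pinned down by one-element exchanges that move the missing index, namely $(\{s\},\{m\})$ and $(\{\overline t\},\{\overline m\})$.

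``Every exchange can be performed inside many facets'' is not a mechanism either. What kills the free bits when $r>0$ are the three- and four-element exchanges involving two distinct $s,s'\in\{1,\ldots,r+1\}$: equation~\eqref{eq:minor-3b} and the $4\times4$ minors in Lemmas~\ref{lemma:main2} and~\ref{lemma:main3}. These exist only for $r>0$. The paper organizes the analysis by a case split: some $a_{s,\overline s}=0$, else some $a_{s,\overline m}=0$, else some $a_{\overline t,m}=0$, else $\cA=A_0$ (Lemmas~\ref{lemma:main1}--\ref{lemma:main3}).

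\textbf{The $r=0$ count.} Your $r=0$ paragraph is also unsupported. The barred vertices do not decouple: each facet $\{j\}\sqcup\overline{[m]\setminus\{i,j\}}$ contains $m-2$ of them, so every $m-2$ of the vectors $\lambda(\overline k)$ must be independent. That observation actually yields a clean proof for $r=0$. For $m\ge 4$ the $m$ barred vectors span $\Z_2^{m-1}$, so there are two cases:
\begin{itemize}
\item They form a single circuit. This forces $\lambda(i)=\lambda(\overline i)$ for all $i$, i.e.\ $A_0$.
\item The $\lambda(\overline k)$, $k\ne\nu$, form a circuit spanning a hyperplane $H$, with $\lambda(\overline\nu)\notin H$. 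Then $\lambda(i)=\lambda(\overline i)+\lambda(\overline\nu)$ is forced for $i\ne\nu$, and $\lambda(\nu)$ is any of the $2^{m-2}$ vectors off $H$.
\end{itemize}
The barred configuration is unique up to $\GL$ with trivial stabilizer, which gives $1+m\cdot 2^{m-2}$.

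Nothing comparable is supplied for $r>0$. You also never verify that the $m+1$ candidates are actually characteristic.
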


In Section~\ref{sec4}, we study the topology of small covers over~$\Bier\Delta_{r}^{m-1}$.
First, when~$r=0$, Corollary~\ref{thm:main2} weakens the classification in Theorem~\ref{thm:main1} to a classification up to homeomorphism.
We denote by~$\RP^n$ the~$n$-dimensional real projective space.

\begin{corollary}\label{thm:main2}
    For~$m\geq 4$ and~$r\in\{0,m-3\}$, there are at most three small covers over~$\Bier\Delta_{r}^{m-1}$ up to homeomorphism.
    More precisely, each of them is homeomorphic to one of the following manifolds
    $$
        \#^{m+1}\RP^{m-1}, \quad (\#^{m-1}\RP^{m-1})\#(\RP^{m-2}\times \RP^1), \quad \text{or} \quad (\#^{m-1}\RP^{m-1})\#\RP(\gamma\oplus \R^{m-2}),
    $$
    where~$\gamma$ denotes the tautological line bundle over~$\RP^1$.
\end{corollary}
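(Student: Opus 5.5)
We first identify the polytope combinatorially. For $r=0$, the faces of $\Delta_0^{m-1}$ are the empty set and the singletons, and $\widehat{\Delta_0^{m-1}}$ consists of all subsets of $[\overline{m}]$ of size at most $m-2$. Hence $\Bier\Delta_0^{m-1}$ is the deleted join of these two complexes. Its facets are $\partial\Delta^{m-1}$ on $[\overline{m}]$ (the facets $\overline{\tau}$ with $|\tau|=m-1$), together with the sets $\{i\}\cup\overline{\tau}$ with $|\tau|=m-2$ and $i\notin\tau$. This is the boundary of the dual of the polytope obtained from the $(m-1)$-simplex $\Delta^{m-1}$ by truncating its $m$ vertices, where the vertex $i$ of the sphere corresponds to the new facet cut out at vertex $i$. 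Equivalently, it is the stellar subdivision of $\partial\Delta^{m-1}$ at all $m$ facets. The case $r=m-3$ is the same, since $\Bier K\cong\Bier\widehat{K}$ and $\widehat{\Delta_{m-3}^{m-1}}\cong\Delta_0^{m-1}$. So the whole corollary is a statement about small covers over the simplex $P=\Delta^{m-1}$ with all vertices cut, written $\mathrm{vc}^m(\Delta^{m-1})$.

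The second step uses the standard fact that vertex truncation corresponds to equivariant connected sum. If a small cover $M$ over $Q$ has characteristic function $\lambda$, and $Q'$ is $Q$ with a vertex $v$ cut, then any small cover over $Q'$ is obtained from the small cover over $Q$, given by restricting $\lambda$, by connected sum at the fixed point over $v$ with a small cover over $\Delta^{n}$ (that is, with $\RP^{n}$). This holds provided the restriction to the original facets is again a characteristic function, i.e. is nonsingular at $v$. When it is nonsingular at $v$, the result is $M\#\RP^{m-1}$, since the cut facet gets the label equal to the sum of the labels at $v$.

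When the restriction fails to be nonsingular at $v$, the new facet's label $\lambda_v$ is instead some other vector, one that is nonsingular with each $(m-2)$-subset of the labels at $v$. This happens exactly when the labels at $v$ span only a hyperplane, and $\lambda_v$ completes it to a basis. Cutting the vertex is then a connected sum with a small cover over the prism $\Delta^{m-2}\times I$, glued along a vertex, where the labels come from a singular configuration. Small covers over a prism $\Delta^{m-2}\times\Delta^1$ are $\RP^{m-2}\times\RP^1$ or the projectivization $\RP(\gamma\oplus\R^{m-2})$. These are the two twisted/untwisted projective bundles over $\RP^1$, or equivalently over $\RP^{m-2}$.

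The third step uses the explicit representatives from Theorem~\ref{thm:r=0_sc}. For each of the $1+m2^{m-2}$ D--J classes, we read off the labels on the original $m$ facets of $\Delta^{m-1}$, i.e. the vertices $\overline{1},\dots,\overline{m}$.

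In the generic case, these $m$ labels are in general position: any $m-1$ of them form a basis, so they form a characteristic function on $\Delta^{m-1}$, i.e. $e_1,\dots,e_{m-1},e_1+\dots+e_{m-1}$ up to $\GL$. Repeated use of the truncation principle then gives $\RP^{m-1}\#^m\RP^{m-1}=\#^{m+1}\RP^{m-1}$.

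In the other case, exactly one vertex $v$ of $\Delta^{m-1}$ is singular. The count $m2^{m-2}$ suggests $m$ choices of vertex times $2^{m-2}$ choices of label data. Splitting off the other $m-1$ truncated vertices as $\RP^{m-1}$ summands, we are reduced to a small cover over the prism $\Delta^{m-2}\times I$. Note that $\Delta^{m-1}$ with one vertex cut is exactly a prism. This small cover is one of the two bundles above, which gives the remaining two manifolds. Deciding which one occurs depends on whether the sum of the relevant labels is trivial, which is where the $2^{m-2}$ choices split into two types.

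The main obstacle is justifying the connected-sum decomposition rigorously in the singular case. The labels on the original facets need not restrict to a characteristic function on $\Delta^{m-1}$, so we cannot simply build $M$ as $N\#\RP^{m-1}\#\cdots$. To handle this, I would first cut the $m-1$ nonsingular vertices. For each of these, the edge of the graph adjacent to the cut facet carries the sum label, by nonsingularity at the neighbours, and a local computation of the D--J construction $M=P\times\Z_2^{n}/\sim$ shows that cutting a vertex is an equivariant connected sum with $\RP^{n}$. What remains is the prism, where we apply the classification of small covers over $\Delta^{k}\times\Delta^1$ as projective bundles.
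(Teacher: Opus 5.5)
Your proposal is correct and is the paper's proof in dual language. The paper writes $\Bier\Delta_0^{m-1}$ as $\Gamma_0$ connected-summed with $\Gamma_1,\ldots,\Gamma_m$ along the missing faces $\sigma_i=[\overline{m}]\setminus\{\overline{i}\}$, which are your vertex truncations. It splits off an $\RP^{m-1}$ at every $\sigma_i$ with $\det\Lambda|_{\sigma_i}=1$, and in the non-$A_0$ cases keeps $\Gamma_0\#_{\sigma_\nu}\Gamma_\nu$ (your prism) and applies \eqref{eq:beforcoro5-2} with $a_{m-1}=0$.

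Two small fixes are needed.
\begin{itemize}
\item The sets $\overline{\tau}$ with $|\tau|=m-1$ are not facets of $\Bier\Delta_0^{m-1}$. They are exactly these missing faces $\sigma_i$.
\item ``Exactly one singular vertex'' needs an actual justification, not the counting heuristic. You can read off $\nu=m,1,t$ for $A_v,B^1_v,C^t_v$. Alternatively, argue directly: any $m-2$ of the labels on $[\overline{m}]$ are independent, which for $m\geq 4$ forces them to span $\Z_2^{m-1}$. Hence their unique linear relation is supported either on all of $[\overline{m}]$ or on $[\overline{m}]\setminus\{\overline{\nu}\}$.
\end{itemize}
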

When~$0<r<m-3$, the problem of reducing the classification to one up to homeomorphism remains open.
Instead, we study these small covers through their rational Betti numbers. One exceptional case was already computed in~\cite{Choi-Yoon-Yu2026} and is recalled in Proposition~\ref{prop:Bettisk}.
For the remaining cases, Corollary~\ref{thm:betti_r_positive} gives an explicit formula.
Combining these results, we obtain Corollary~\ref{cor:main}, which determines the rational Betti numbers of all small covers over~$\Bier\Delta_r^{m-1}$.
We denote by~$\beta_{i}(X)$ the $i$th rational Betti number of a topological space~$X$.
\begin{corollary}\label{cor:main}
    Let~$m\geq 5$ and~$0<r\leq \left\lfloor \frac{m-3}{2}\right\rfloor$. 
    Among the D--J equivalence classes of small covers over~$\Bier\Delta_r^{m-1}$, exactly one has rational Betti numbers given, for each~$k\geq0$, by
$$
\beta_{2k}(M) = \dbinom{m}{2k} \cdot \ind_{\{0,1,\ldots,r+1\}}(2k),
$$
and, for each~$k \geq 1$, by
$$
\beta_{2k-1}(M) = \dbinom{m}{2k} \cdot \ind_{\{m-1-r,\ldots,m\}}(2k).
$$
For small covers~$M$ in all the other D--J equivalence classes, the rational Betti numbers are given, for each~$k\geq 0$, by
$$
\beta_{2k}(M) = \dbinom{m-1}{2k} \cdot \ind_{\{0,1,\ldots,r+1\}}(2k),
$$
and, for each~$k\geq 1$, by
$$
\beta_{2k-1}(M) = \dbinom{m-1}{2k} \cdot \ind_{\{m-r-1,\ldots,m-1\}}(2k) + \dbinom{m-2}{r+1} \cdot \ind_{\{m-r-1\}}(2k) + \dbinom{m-2}{r} \cdot \ind_{\{r+2\}}(2k).
$$
\end{corollary}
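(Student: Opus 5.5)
The plan is to compute the rational Betti numbers with the Suciu--Trevisan formula, equivalently Choi--Park's formula, for real toric spaces. For a small cover $M$ with characteristic matrix $\Lambda$ over $K=\Bier\Delta_r^{m-1}$, it reads
$$
\beta_i(M)=\sum_{\omega\in\row(\Lambda)} \widetilde\beta^{\,i-1}(K_\omega),
$$
where $\omega$ runs over the $2^{m-1}$ elements of the row space of $\Lambda$ over $\Z_2$, viewed as subsets of the vertex set, and $K_\omega$ is the full subcomplex on $\omega$.

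First, invoke the classification of Theorem~\ref{thm:main1} in its explicit form (Theorem~\ref{thm:r=0_sc}), restricted to $0<r<m-3$. This leaves $1+m$ D--J classes, whose representative matrices we list explicitly. Since rational Betti numbers are D--J invariants, it suffices to determine $\row(\Lambda)$ for each of the $m+1$ representatives. I expect the $m+1$ classes to split as one ``symmetric'' class and $m$ classes permuted by the $\mathfrak S_m$-symmetry of $\Bier\Delta_r^{m-1}$. This matches the statement: one exceptional class, and a common formula for all the others.

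Second, compute $\row(\Lambda)$ for each type. For the symmetric class I expect the row space to be $\{\sigma\cup\overline{\sigma} : \sigma\subseteq[m],\ |\sigma| \text{ even}\}$, or a similar set indexed by even subsets of $[m]$, which accounts for the $\binom{m}{2k}$. For the other classes, one index $j\in[m]$ is distinguished. The row space then consists of sets indexed by even subsets of an $(m-1)$-set, and these should be either ``balanced'' ($\sigma\cup\overline\sigma$) or shifted by a fixed correction at $j,\overline j$.

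Third, evaluate the reduced cohomology of the relevant full subcomplexes. Here we use the results on full subcomplexes of Bier spheres of Heudtlass--Katthan and \cite{Choi-Yoon-Yu2026}, recalled in Proposition~\ref{prop:Bettisk}. For $\omega=\sigma\cup\overline\sigma$ with $|\sigma|=2k$, the full subcomplex should be homotopy equivalent to $\Bier$ of a skeleton of a smaller simplex or to the join of skeleta. Its only nonzero reduced cohomology should be a single copy of $\Q$, in degree $2k-1$ when $2k\le r+1$ and degree $2k-2$ when $2k\ge m-1-r$, with acyclic cases in between. This should produce the indicator functions. The extra terms $\binom{m-2}{r+1}$ and $\binom{m-2}{r}$ in the generic class should come from the shifted $\omega$'s containing exactly one of $j,\overline j$. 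For these $K_\omega$ is a wedge of spheres counted by $(r+1)$- or $r$-subsets of the remaining $m-2$ indices, contributing to $\beta_{m-r-2}$ and $\beta_{r+1}$ respectively.

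Finally, the hypothesis $r\le\lfloor (m-3)/2\rfloor$ ensures that the ranges $\{0,\dots,r+1\}$ and $\{m-1-r,\dots,m\}$ are disjoint, so that no contributions collide in the stated formula. The case $r>(m-3)/2$ follows by the duality $\Bier\Delta_r^{m-1}\cong\Bier\Delta_{m-3-r}^{m-1}$, which explains why it is excluded. I expect the main obstacle to be the third step: correctly identifying the homotopy types of the full subcomplexes for the non-balanced $\omega$ and tracking their degrees. I would first handle this with the general homotopy description from \cite{Choi-Yoon-Yu2026}, and then sanity-check the result against the Euler characteristic and Poincaré duality.
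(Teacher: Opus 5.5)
Your overall route is the paper's: Lemma~\ref{lem:choi-park2017cohomology}, the list of $m+1$ classes from Theorem~\ref{thm:r=0_sc}, Proposition~\ref{prop:Bettisk} for the class of $A_0$, and Lemma~\ref{thm:full_subcomplex_Bier} for the homotopy types of full subcomplexes. (It is this lemma, not Proposition~\ref{prop:Bettisk}, that supplies the full-subcomplex input.) Your treatment of the balanced sets $I\cup\overline{I}$ is exactly the paper's.

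The gap is in the step you yourself flag as the main obstacle. You predict that the non-balanced part of $\row\Lambda$ consists of balanced sets shifted by a correction at $j,\overline{j}$, so that they contain exactly one of $j,\overline{j}$. That prediction is wrong, and with that shape the extra terms cannot appear. Indeed, if $V_\omega=I\cup\overline{J}$ with $|I\triangle J|=1$, Lemma~\ref{thm:full_subcomplex_Bier}(3),(4) gives a point or a single sphere, never a wedge of $\binom{m-2}{r}$ or $\binom{m-2}{r+1}$ spheres. A direct computation for $A_{(1,\ldots,1)}$, $B^s_{(0,\ldots,0)}$ and $C^t_{\chi^t}$, with distinguished index $\nu=m$, $s$, $t$ respectively, shows that the row space, viewed as a family of vertex sets, is
\[
W_\nu=\bigl\{I\cup\overline{I} : I\subseteq[m]\setminus\{\nu\},\ |I| \text{ even}\bigr\}\cup\bigl\{([m]\cup\{\overline{\nu}\})\triangle(I\cup\overline{I}) : I\subseteq[m]\setminus\{\nu\},\ |I|\equiv r \pmod{2}\bigr\}.
\]
So the non-balanced $V_\omega$ contain \emph{both} $\nu$ and $\overline{\nu}$, and exactly one vertex of every other pair $\{i,\overline{i}\}$.

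Write such a set as $V_\omega=I'\cup\overline{J'}$, with $I'\cap J'=\{\nu\}$ and $I'\cup J'=[m]$. Lemma~\ref{thm:full_subcomplex_Bier} gives a point unless $J'=\{\nu\}$ or $I'=\{\nu\}$, that is, unless $V_\omega=[m]\cup\{\overline{\nu}\}$ or $V_\omega=\{\nu\}\cup[\overline{m}]$. These two give
\[
\Sigma\Lk(\{\nu\},\Delta_r^{m-1})=\Sigma\Delta_{r-1}^{m-2}\simeq\bigvee^{\binom{m-2}{r}}S^{r}
\quad\text{and}\quad
\Sigma\Delta_{m-r-4}^{m-2}\simeq\bigvee^{\binom{m-2}{r+1}}S^{m-r-3},
\]
which are wedges of spheres by shellability and an Euler characteristic count. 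By the parity condition in $W_\nu$, the first set lies in the row space iff $r$ is even, and the second iff $m-r$ is odd. This is why the extra terms occur only in the odd degrees $r+1$ and $m-r-2$, as encoded by $\ind_{\{r+2\}}(2k)$ and $\ind_{\{m-r-1\}}(2k)$. Your sketch never accounts for this, although the degrees you name are right. Supplying it is precisely the content of the paper's proof of Corollary~\ref{thm:betti_r_positive}.

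On the positive side, the formula for $W_\nu$ confirms your $\mathfrak{S}_m$-reduction. A relabeling $\pi$ of $[m]$ sends $W_\nu$ to $W_{\pi(\nu)}$, so one representative suffices, which is slightly more economical than the paper's parallel treatment of three families.

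Two smaller points:
\begin{itemize}
\item A Poincar\'e duality check is not available. Since $(1,\ldots,1)\notin W_\nu$, these small covers are non-orientable; indeed $\beta_{m-1}(M)=0\neq\beta_0(M)$.
\item The phrase ``exactly one'' needs the observation that the two formulas differ. For example, $\beta_2=\binom{m}{2}$ versus $\binom{m-1}{2}$, since $r\geq 1$.
\end{itemize}
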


\section{Preliminaries}
Let~$S$ denote a finite set of vertices. 
An abstract \emph{simplicial complex}~$\Gamma$ on the vertex set~$S$ is a collection of subsets of~$S$ such that every subset of a set in~$\Gamma$ must also belong to~$\Gamma$.
An element of~$\Gamma$ is called a \emph{face} or a \emph{simplex}, and the \emph{dimension} of a face~$\sigma$ is given by~$\left\vert \sigma \right\vert - 1$. 
A \emph{facet} is defined as an inclusion-maximal face in~$\Gamma$.
The \emph{dimension} of~$\Gamma$, denoted~$\dim \Gamma$, is the maximum dimension among all its facets.
The complex~$\Gamma$ is said to be \emph{pure} when every facet possesses the same dimension.
A \emph{subcomplex} is a subcollection of~$\Gamma$ that is itself a simplicial complex.
For any~$I \subset S$, the \emph{full subcomplex}~$\Gamma_I$ restricted to~$I$ is defined as~$\Gamma_I= \{\sigma \in \Gamma \colon \sigma \subseteq I\}$.

Let~$\Gamma$ be a pure simplicial complex on~$[m]$.
The \emph{real moment-angle complex}~$\R\cZ_{\Gamma}$ associated with~$\Gamma$ is defined by
$$
\R\cZ_{\Gamma} = \bigcup_{\sigma \in \Gamma}\bigl\{ (x_1,\ldots,x_m) \in (D^1)^m \mid x_i \in S^0 \text{ for } i \notin \sigma \bigr\},
$$
where $D^1 = [-1,1]$ and $S^0 = \{-1,1\}$ is its boundary.

Let~$\lambda \colon \Z_2^m \longrightarrow \Z_2^n$ be a linear map with~$n \leq m$.
We often represent~$\lambda$ by an~$n \times m$ $\Z_2$-matrix
$$
\Lambda = \big( \lambda(e_1) \ \cdots \ \lambda(e_m) \big),
$$
where~$e_i$ denotes the~$i$th standard basis vector of~$\Z_2^m$.
We say that~$\lambda$, or equivalently~$\Lambda$, satisfies the
\emph{non-singularity condition} over~$\Gamma$ if, for every simplex~$\{i_1,\ldots,i_k\}$ of~$\Gamma$, the set
$$
\{\lambda(e_{i_1}),\ldots,\lambda(e_{i_k})\}
$$
is linearly independent in~$\Z_2^n$.

The canonical sign action of~$\Z_2^m$ on $(D^1)^m$ restricts to an action on~$\RZ_\Gamma$, and hence induces an action of~$\ker\lambda$ on~$\RZ_\Gamma$.
By~\cite{Choi-Kaji-Theriault2017}, this action is free if and only if~$\lambda$ satisfies the non-singularity condition over~$\Gamma$.
When this condition holds, $\lambda$ is called a \emph{mod~$2$ characteristic map} over~$\Gamma$, and~$\Lambda$ is called a \emph{mod~$2$ characteristic matrix} over~$\Gamma$.
Since we only consider the mod~$2$ setting in this paper, we shall simply call them a \emph{characteristic map} and a \emph{characteristic matrix}, respectively.
The associated \emph{real toric space} is defined by
$$
    M(\Gamma,\lambda) := \RZ_\Gamma/\ker\lambda.
$$

Let us consider the case where~$\Gamma$ is polytopal, that is, $\Gamma$ is the dual complex of a simple polytope.
In this case, for a characteristic map~$\lambda$, the associated real toric space~$M(\Gamma,\lambda)$ is called a \emph{small cover}~\cite{Davis-Januszkiewicz1991}.
For~$n \times m$ characteristic matrices~$\Lambda_1$ and~$\Lambda_2$ over~$\Gamma$, we say that~$\Lambda_1$ and~$\Lambda_2$ are \emph{Davis--Januszkiewicz equivalent} (or, simply, \emph{D--J equivalent}) if there exists a matrix~$P \in \GL(n,\Z_2)$ such that~$\Lambda_2 = P\Lambda_1$.
Accordingly, the corresponding small covers~$M(\Gamma,\Lambda_1)$ and~$M(\Gamma,\Lambda_2)$ are also said to be D--J equivalent.
This agrees, in essence, with the definition of D--J equivalence given in Section~\ref{sec:intro}.

Now, we introduce notions of \emph{Bier spheres}, originally defined in~\cite{Bier1992}.
For a given set of integers~$I$, let~$\overline{I} = \{\overline{i} \mid i \in I\}$ denote its disjoint isomorphic copy. 
Suppose~$K$ is a simplicial complex on a finite set~$S$. 
The (combinatorial) \emph{Alexander dual} of~$K$, denoted by~$\widehat{K}$, is a simplicial complex on~$\overline{S}$ defined by
$$
    \widehat{K} = \{\overline{\sigma} \subseteq \overline{S} \mid S \setminus \sigma \notin K\}.
$$
Under the canonical identification~$\overline{\overline{S}} = S$, the dual of~$\widehat{K}$ is exactly~$K$ itself.
For any two simplicial complexes~$K_1$ on~$S$ and~$K_2$ on~$\overline{S}$, their \emph{deleted join}, written as~$K_1 \ast_{\Delta} K_2$, is given by
$$
    K_1 \ast_{\Delta} K_2 = \{\sigma \cup \overline{\tau} \mid \sigma \in K_1, \overline{\tau} \in K_2, \sigma \cap \tau = \emptyset\}.
$$

For a simplicial complex~$K$ on~$[m] = \{1,\ldots,m\}$, its Bier sphere~$\Bier K$ is a simplicial complex on the disjoint union~$[m] \cup [\overline{m}]$, constructed as the deleted join
$$
    \Bier K = K \ast_{\Delta} \widehat{K}.
$$
Every Bier sphere is an~$(m-2)$-dimensional simplicial sphere.
However, Bier spheres are not necessarily polytopal~\cite{Bjorner-Paffenholz-Sjostrand-Ziegler2005}.
Nevertheless, some Bier spheres are known to be polytopal.
Let~$\Delta_{r}^{m-1}$ be the~$r$-skeleton of the~$(m-1)$-simplex on~$[m]$, namely
$$
    \Delta_{r}^{m-1}=\{\sigma \subseteq [m] \mid |\sigma| \leq r+1\}.
$$
The associated Bier sphere~$\Bier\Delta_{r}^{m-1}$ is known to be polytopal~\cite{Jevtic-Timotijevic-Zivaljevic2021}.

For each integer~$m \geq 2$, define an~$(m-1)\times 2m$ matrix~$\Lambda_m$ over~$\Z_2$ by
\begin{equation}\label{eq:matrix} 
    \Lambda_m := 
        \begin{pNiceArray}{cccccccc}[first-row,margin,code-for-first-row=\scriptstyle] 
            1 & \cdots & m-1 & m & \overline{1} & \cdots & \overline{m-1} & \overline{m}\\
            1 & \cdots & 0 & 1 & 1 & \cdots & 0 & 1\\
            \vdots & \ddots & \vdots & 1 & \vdots & \ddots & \vdots & 1\\
            0 & \cdots & 1 & 1 & 0 & \cdots & 1 & 1
        \end{pNiceArray} 
\end{equation}
where the first row records the labels of the vertices.
For a pure~$(m-2)$-dimensional simplicial complex~$\Gamma$ on~$[m]\cup[\overline{m}]$, the matrix~$\Lambda_m$ is a mod~$2$ characteristic matrix over~$\Gamma$ if and only if, for each~$1 \leq i \leq m$, the vertices~$i$ and~$\overline{i}$ are not adjacent in~$\Gamma$.
In particular, $\Lambda_m$ is a mod~$2$ characteristic matrix over any Bier sphere. 
See~\cite{Limonchenko-Sergeev2024,Ivan-Marinko-Rade2025} for further details.
This gives a natural first example of a mod~$2$ characteristic matrix over an arbitrary Bier sphere.
Thus, every Bier sphere admits at least one mod~$2$ characteristic matrix.
In this paper, we focus on the class of Bier spheres~$\Bier\Delta_r^{m-1}$ and determine all characteristic matrices over these spheres up to D--J equivalence.

\section{The characteristic matrices over~$\Bier\Delta_{r}^{m-1}$}\label{sec3}

In this paper, we assume that~$m \geq 4$, unless otherwise stated. 
Let~$\Delta^{m-1}$ be the~$(m-1)$-simplex on the vertex set~$[m]=\{1,\ldots,m\}$.
For~$0 \leq r \leq m-3$, let~$\Delta_{r}^{m-1}$ denote the~$r$-skeleton of~$\Delta^{m-1}$.
Note that the Alexander dual of~$\Delta_{r}^{m-1}$ coincides with~$\Delta_{m-r-3}^{m-1}$.
Therefore, in order to study the Bier spheres of all skeleta of~$\Delta^{m-1}$, it suffices to consider~$\Bier\Delta_{r}^{m-1}$ for
$$
    0 \leq r \leq \left\lfloor \frac{m-3}{2} \right\rfloor.
$$
Unless otherwise stated, we shall make this assumption on~$r$ throughout Sections~\ref{sec3} and~\ref{sec:proof}.

Now, let 
$$
    \cR_r := \{1,\ldots,r+1,\overline{r+2},\ldots,\overline{m-1}\}.
$$
Then~$\cR_r$ is a facet of~$\Bier\Delta_{r}^{m-1}$. 
Hence, the characteristic matrix on~$\Bier\Delta_{r}^{m-1}$ can, up to a change of basis, be written in the form
\begin{equation}\label{eq:mat_form}
    \Lambda(\cA) = \begin{pNiceArray}{c:cc}[first-row,margin,code-for-first-row=\scriptstyle]
        \begin{array}{*{6}{>{\scriptstyle}c}} 1 & \cdots & r+1 & \overline{r+2} & \cdots & \overline{m-1} \end{array} & \begin{array}{*{7}{>{\scriptstyle}c}} \overline{1} & \cdots & \overline{r+1} & r+2 & \cdots & m & \overline{m} \end{array} \\
        &\\
        I_{m-1} & \cA \\
        &
    \end{pNiceArray}
\end{equation}
where~$I_{m-1}$ denotes the identity matrix of order~$m-1$, and~$\cA$ is an~$(m-1) \times (m+1)$ matrix.
Therefore, the classification of small covers over~$\Bier\Delta_{r}^{m-1}$ up to D--J equivalence reduces to determining all such matrices~$\cA$ satisfying the non-singularity condition.

We denote by~$J_{k,\ell}$ the~$(k \times \ell)$ matrix over~$\Z_2$ whose entries are all equal to~$1$.
If we permute the columns of~$\Lambda_m$ in~\eqref{eq:matrix} to obtain the form in~\eqref{eq:mat_form}, then~$\cA$ becomes precisely the matrix
$$
    A_0 := \begin{pNiceArray}{c:c}[first-row,margin,code-for-first-row=\scriptstyle]
        \begin{array}{*{6}{>{\scriptstyle}c}} \overline{1} & \cdots & \overline{r+1} & r+2 & \cdots & m-1 \end{array}   & \begin{array}{*{2}{>{\scriptstyle}c}} m & \overline{m} \end{array}\\
        &\\
        I_{m-1}& J_{m-1,2} \\
        &
    \end{pNiceArray}.
$$

Throughout this paper, for any positive integer~$\ell$, we regard each element of~$\Z_2^\ell$ as either a row vector or a column vector, depending on the context.
\begin{definition}\label{def:mat}
    Assume that~$m \geq 4$.
    Let~$v = (v_{r+2},\ldots,v_{m-1})\in \Z_2^{m-r-2}$.
    \begin{enumerate}
        \item 
            \begin{equation*}
                A_{v} = \begin{pNiceArray}{c:c:c}[first-row,margin,code-for-first-row=\scriptstyle]
                    \begin{array}{*{6}{>{\scriptstyle}c}} \overline{1} & \cdots & \overline{r+1}&r+2 & \cdots & m-1 \end{array} & m & \overline{m}\\
                    &&\\
                    J_{m-1} - I_{m-1} & \begin{array}{c} J_{r+1,1}\\ \\ \hdashline \\ v^T \end{array} & J_{m-1,1}\\
                    &&
                \end{pNiceArray}.
            \end{equation*}
        \item 
            Let~$M^{v}_j$ be the~$(m-r-2) \times j$ matrix over~$\Z_2$ whose columns are all equal to~$v$.
            For an element~$s \in \{1,\ldots,r+1\}$, let~$B_v^s$ be the matrix obtained from 
            \begin{equation}\label{r=0_case3}
                \begin{pNiceArray}{c:c:c:c}[first-row,margin,code-for-first-row=\scriptstyle]
                    \begin{array}{*{3}{>{\scriptstyle}c}} \overline{1} & \cdots & \overline{r+1} \end{array} & \begin{array}{*{3}{>{\scriptstyle}c}} r+2 & \cdots & m-1 \end{array} & m & \overline{m}\\
                    &&&\\
                    I_{r+1} & 0  & J_{r+1,1} & J_{r+1,1}\\
                    &&&\\ \hdashline 
                    && v_{r+2}+1 & \\
                    M^v_{r+1} & I_{m-r-2}+M^v_{m-r-2}& \vdots &J_{m-r-2,1} \\
                    && v_{m-1}+1&
                \end{pNiceArray}
            \end{equation}
            by replacing its~$s$th row with
            \begin{equation}\label{eq:B_s0}
                \begin{pNiceArray}{c:cc}[first-row,margin,code-for-first-row=\scriptstyle] 
                    \begin{array}{*{6}{>{\scriptstyle}c}} \overline{1} & \cdots & \overline{r+1} & r+2 & \cdots & m-1 \end{array}   & m & \overline{m}\\
                    J_{1,m-1}& 1 & 0
                \end{pNiceArray} \in \Z_2^{m+1}.
            \end{equation}
        \item 
            For an element~$t \in \{r+2,\ldots,m-1\}$, let~$C_v^t$ be the matrix obtained from~$A_0$ by adding
            $$
                \begin{array}{@{}r@{\,}}
                    r+1 \left\{ \begin{array}{c} \vphantom{1+v_s}\\ \vphantom{\vdots}\\ \vphantom{1+v_s} \end{array} \right. \\
                    \vphantom{v_{r+2}}\\
                    \vphantom{\vdots}\\
                    \vphantom{v_{m-1}}
                \end{array}
                \left(
                    \begin{array}{c}
                        1+v_t\\
                        \vdots\\
                        1+v_t\\
                        v_{r+2}\\
                        \vdots\\
                        v_{m-1}
                    \end{array}
                \right) \in \Z^{m-1}_2
            $$
            to its~$t$th column and
            $$
                \begin{pNiceArray}{c:cc}[first-row,margin,code-for-first-row=\scriptstyle] 
                    \begin{array}{*{6}{>{\scriptstyle}c}} \overline{1} & \cdots & \overline{r+1} & r+2 & \cdots & m-1 \end{array}   & m & \overline{m}\\
                    J_{1,m-1}& 1 & 0
                \end{pNiceArray} \in \Z^{m+1}_2
            $$
            to its~$t$th row.
    \end{enumerate}
\end{definition}

\begin{remark}\label{remark:main}
    For~$r=0$, all matrices in Definition~\ref{def:mat} give characteristic matrices over~$\Bier\Delta^{m-1}_0$.
    If~$r>0$, then the corresponding matrix~$\Lambda(\cA)$ in~\eqref{eq:mat_form} satisfies the non-singularity condition for each of the following cases:
    \begin{enumerate}
        \item $\cA = A_{(1,\ldots,1)}$;
        \item $\cA = B^{s}_{(0,\ldots,0)}$ for $s \in \{1,\ldots,r+1\}$; and
        \item $\cA = C^t_v$ for $t\in \{r+2,\ldots,m-1\}$, where $v=(v_{r+2},\ldots,v_{m-1})$ satisfies $v_t=1$ and $v_i=0$ for all $i\neq t$.
    \end{enumerate}
\end{remark}

Theorem~\ref{thm:r=0_sc} gives the complete list of matrices that actually occur among those defined above and counts the resulting D--J equivalence classes.
The proof is given in Section~\ref{sec:proof}.

\begin{theorem}\label{thm:r=0_sc}
    Let~$m \geq 4$.
    The number of mod~$2$ characteristic matrices over~$\Bier\Delta_{r}^{m-1}$ up to D--J equivalence is given by
    $$
        \begin{cases}
            1+m \cdot 2^{m-2}, & \text{if } r = 0, \\
            1+m, & \text{if } 0 < r \leq \left\lfloor \frac{m-3}{2} \right\rfloor.
        \end{cases}
    $$
    Moreover, every such matrix is obtained from~\eqref{eq:mat_form} by taking~$\cA$ to be one of the following matrices.
    When~$r = 0$,
    \begin{enumerate}
        \item $\cA = A_0$;
        \item $\cA = A_v$ for $v \in \Z_2^{m-2}$;
        \item $\cA = B^1_v$ for $v \in \Z_2^{m-2}$;
        \item $\cA = C^t_v$ for $t\in\{r+2,\ldots,m-1\}$ and $v \in \Z_2^{m-2}$.
    \end{enumerate}
    When~$0 < r \leq \left\lfloor \frac{m-3}{2} \right\rfloor$,
    \begin{enumerate}
        \item $\cA = A_0$;
        \item $\cA = A_v$, where~$v = \left(1,\ldots,1\right)$;
        \item $\cA = B^s_v$ for $s\in \{1,\ldots,r+1\}$, where~$v =\left(0,\ldots,0\right)$;
        \item $\cA = C^t_v$ for $t\in\{r+2,\ldots,m-1\}$, where~$v=(v_{r+2},\ldots,v_{m-1})$ satisfies~$v_t=1$ and~$v_i=0$ for all~$i\neq t$.
    \end{enumerate}
\end{theorem}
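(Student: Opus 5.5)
Since $\cR_r$ is a facet, every characteristic matrix is D--J equivalent to exactly one matrix of the form~\eqref{eq:mat_form}: once the columns of $\cR_r$ are normalized to $I_{m-1}$, no nontrivial $P\in\GL(m-1,\Z_2)$ fixes them. So counting D--J classes amounts to counting the matrices $\cA$ for which $\Lambda(\cA)$ is non-singular over $\Bier\Delta_r^{m-1}$. I will obtain the conditions on $\cA$ facet by facet, working outward from $\cR_r$ by pivoting.

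\textbf{Facets and pivoting.} The facets of $\Bier\Delta_r^{m-1}$ have the form $\sigma\cup\overline{\tau}$ with $\sigma\cap\tau=\emptyset$, $|\sigma|\le r+1$, $|[m]\setminus\tau|\ge r+2$ and $|\sigma|+|\tau|=m-1$. Equivalently, one chooses a missing index $j$, and then $\sigma$ is a subset of $[m]\setminus\{j\}$ of size $r+1$, or of size $r$ provided $|\sigma\cup\{j\}|\ge r+2$. Two facets are adjacent when they differ in one vertex. If a column of $\cR_r$ is replaced by a column $c$ of $\cA$, non-singularity says that $c$ has a nonzero entry in the row of the removed vertex. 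Starting from $\cR_r$, I will walk through the facets adjacent to it (and then those at distance two, and so on) to get determinantal conditions, each a mod-$2$ equation or non-vanishing condition on the entries of $\cA$.

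\textbf{Order of the steps.}
\begin{enumerate}
\item Examine the facets adjacent to $\cR_r$. These force many entries of $\cA$ to be $1$. For example, swapping $i\in\{1,\dots,r+1\}$ for $\overline{i}$ is not allowed, but swapping $i$ for $\overline{m}$ is; such constraints fix the $\overline{m}$-column to $J_{m-1,1}$ and similar structure elsewhere.
\item Use the facets at distance two, where $2\times 2$ minors must be nonzero mod $2$. These rigidify the columns $\overline{1},\dots,\overline{r+1},r+2,\dots,m-1$ into one of a few patterns: identity-like (as in $A_0$), $J-I$-like (as in $A_v$), or a pattern with one distinguished row (as in $B^s_v$ or $C^t_v$).
\item Parametrize the remaining freedom by a vector $v\in\Z_2^{m-r-2}$, corresponding to the $m$-column.
\item Verify the higher minors. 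When $r=0$ the sets $\sigma$ are tiny, so only few constraints arise and every $v$ survives. This gives $1+(m-2)\cdot 2^{m-2}+2^{m-2}+2^{m-2}$ matrices, since there are $m-2$ choices of $t$ for the $C$-family together with the $A$- and $B$-families.
\end{enumerate}
For the $r=0$ count, note that the family $A_v$ with $v=0$ must be checked against $A_0$ for duplication. They are in fact distinct: $A_0$ has an identity block, while $A_v$ has a $J-I$ block.

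\textbf{The case $r>0$.} Here the facets with $|\sigma|=r$ or $r+1$ and $r\ge1$ give many additional $3\times3$ (or larger) minor conditions, which I expect to pin $v$ down completely. For the $A$-family, the facets mixing two indices from $\{1,\dots,r+1\}$ with indices $t,t'\ge r+2$ should force $v_t=1$. For $B^s$ they should force $v=0$, and for $C^t$ they should force $v=e_t$. This yields $1+1+(r+1)+(m-r-2)=1+m$ classes. Remark~\ref{remark:main} already supplies the sufficiency direction.

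\textbf{Main obstacle.} The hardest part is the exhaustive necessity direction: showing that the non-singularity conditions leave no matrices other than these four families. I would organize this by a case split on the first row of the $\overline{1}$-column, or more precisely on how many $1$'s the $(\overline{1},\dots,m-1)$ block has in its first row. Then I would propagate constraints using the $S_{r+1}\times S_{m-r-2}$ symmetry of $\Bier\Delta_r^{m-1}$ that fixes $\cR_r$, to cut down the number of cases.
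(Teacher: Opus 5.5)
Your set-up matches the paper's. Normalizing the columns of $\cR_r$ to $I_{m-1}$ gives a unique representative $\Lambda(\cA)$ in each D--J class. Every facet reached from $\cR_r$ by exchanging vertices gives a condition $[\cA]_{X,Y}=1$. Sufficiency comes from Remark~\ref{remark:main}. However, you do not carry out the necessity direction, which is the real content of the theorem, and the concrete steps you do state would fail.

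Every facet $\sigma\cup\overline{\tau}$ of $\Bier\Delta_r^{m-1}$ has $|\sigma|=r+1$ and $|\tau|=m-r-2$ exactly, because $|\sigma|\le r+1$, $|\tau|\le m-r-2$ and $|\sigma|+|\tau|=m-1$. Your alternative with $|\sigma|=r$ is vacuous, so there are no such facets to use. In particular, exchanging $i\in\{1,\ldots,r+1\}$ for $\overline{m}$ does not give a facet, since it drops $|F\cap[m]|$ to $r$. The only single exchanges are $s\leftrightarrow m$ and $\overline{t}\leftrightarrow\overline{m}$. They give only $a_{s,m}=1$ and $a_{\overline{t},\overline{m}}=1$ for $1\le s\le r+1<t\le m-1$. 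Your conclusion that the $\overline{m}$-column is forced to be $J_{m-1,1}$ is therefore false. The matrix $B^s_v$ has $a_{s,\overline{m}}=0$, so that step would eliminate the entire $B$-family and give the wrong count.

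Also, $2\times 2$ minors do not fix the block structure. Higher minors are needed for necessity, not only for verification.
\begin{itemize}
\item Already for $r=0$ with $a_{1,\overline{1}}=0$, the two-element replacement pairs give only $a_{1,t}=a_{\overline{t},\overline{1}}=1$ and $a_{1,\overline{m}}a_{\overline{t},t}=0$. The entries $a_{\overline{t'},t}=1$, $a_{1,\overline{m}}=1$ and $a_{\overline{t},t}=0$, and hence the shape $A_v$, come only from the $3\times 3$ minor of the pair $(\{1,\overline{t},\overline{t'}\},\{t,\overline{m},\overline{1}\})$.
\item For $r>0$, the claims you leave as ``should force'' need the relation \eqref{eq:minor-3b} from $(\{s,s',\overline{t}\},\{t,m,\overline{s}\})$. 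They also need $4\times 4$ minors such as $(\{s,s',\overline{t},\overline{t_0}\},\{t,t_0,\overline{s},\overline{s'}\})$.
\end{itemize}

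You are also missing the case distinction that makes the exhaustive argument work. The paper splits as follows:
\begin{itemize}
\item if some $a_{s,\overline{s}}=0$, the $A$-family is forced (Lemma~\ref{lemma:main1});
\item otherwise, if some $a_{s,\overline{m}}=0$, $B^s$ is forced (Lemma~\ref{lemma:main2});
\item otherwise, if some $a_{\overline{t},m}=0$, $C^t$ is forced (Lemma~\ref{lemma:main3});
\item otherwise $\cA=A_0$.
\end{itemize}
This split also makes the families visibly disjoint. Your proposed split by the number of $1$'s in the first row of the $(\overline{1},\ldots,m-1)$ block does not separate the families. For $r>0$, each of $A_0$, $B^s_{(0,\ldots,0)}$ with $s\neq 1$, and $C^t_v$ with $v_t=1$ and $v_i=0$ for $i\neq t$ has exactly one $1$ there.
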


Note that Theorem~\ref{thm:r=0_sc} is stated for~$m\geq4$.
For~$m=3$, $\Bier\Delta_0^2$ is the boundary complex of a hexagon, and the classification of small covers over it up to D--J equivalence is well known~\cite{Davis-Januszkiewicz1991}.
Moreover, the following characteristic matrix
$$
    \begin{pNiceArray}{cccccc}[first-row,margin,code-for-first-row=\scriptstyle]
        1 &\overline{1}&2 &\overline{2}&3 &\overline{3}\\
        1& 0 & 1& 0 &1& 0 \\
        0 &1& 0 &1& 0 &1
    \end{pNiceArray}
$$
also occurs, but it is not covered by the families listed in Theorem~\ref{thm:r=0_sc}.

\section{Proof of Theorem~\ref{thm:r=0_sc}}\label{sec:proof}

In this section, we prove Theorem~\ref{thm:r=0_sc}.
Recall~$\cR_r= \{1,\ldots,r+1,\overline{r+2},\ldots,\overline{m-1}\} \subset [m] \cup [\overline{m}]$.
For each~$r+2 \leq k \leq m-1$, the symbol~$\overline{k}$ in the row set~$\cR_r$ represents the row of~$\cA$ originally indexed by~$k$.
We denote the column set of~$\cA$ by
$$
    \cC_r= \{\overline{1},\ldots,\overline{r+1},r+2,\ldots,m-1,m,\overline m\}.
$$
In this section, we denote the entries of a matrix~$\cA$ by
$$
    \cA=(a_{i,j})_{i\in\cR_r,  j\in\cC_r}.
$$
Let~$X \subset \cR_r$ and~$Y \subset \cC_r$.
Note that~$\cR_r$ is a facet of~$\Bier \Delta_{r}^{m-1}$.
A pair~$(X,Y)$ is called a \emph{replacement pair} for~$\cR_r$ if~$(\cR_r \setminus X)\cup Y$ is also a facet of~$\Bier \Delta_{r}^{m-1}$.

For a matrix~$M$, let~$X$ and~$Y$ be subsets of the row and column sets of~$M$, respectively.
We denote by~$[M]_{X,Y}$ the minor of~$M$ determined by~$X$ and~$Y$.

\begin{proposition}\label{prop:minor}
    A pair~$(X,Y)$ is a replacement pair for~$\cR_r$ if and only if
    \begin{enumerate}
        \item $|X \cap [m]| = |Y \cap [m]|$;
        \item $|X \cap [\overline{m}]| = |Y \cap [\overline{m}]|$;
        \item if~$i \in Y \setminus \{m,\overline{m}\}$, then~$\overline{i} \in X$;
        \item $\{m,\overline{m}\} \not\subset Y$.
    \end{enumerate}
    Moreover, for every replacement pair~$(X,Y)$ for~$\cR_r$,
    \begin{equation}\label{eq:minor_cal}
        [\cA]_{X,Y}=1.
    \end{equation}
\end{proposition}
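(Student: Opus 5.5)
The plan is to first make the facets of $\Bier\Delta_r^{m-1}$ explicit, then read off conditions (1)--(4) as a translation of the facet description, and finally obtain \eqref{eq:minor_cal} from the non-singularity condition via a Laplace expansion of the minor.

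\emph{Facets.} A face of $\Bier\Delta_r^{m-1}$ has the form $\sigma\cup\overline{\tau}$ with $\sigma\cap\tau=\emptyset$ and $|\sigma|\le r+1$. The condition $\overline{\tau}\in\widehat{\Delta_r^{m-1}}$ means $[m]\setminus\tau\notin\Delta_r^{m-1}$, i.e.\ $|[m]\setminus\tau|\ge r+2$, i.e.\ $|\tau|\le m-r-2$. A facet has $m-1$ vertices, so both bounds are attained:
$$|\sigma|=r+1,\qquad |\tau|=m-r-2,$$
and $\sigma\sqcup\tau$ misses exactly one index of $[m]$. Conversely, any disjoint $\sigma,\tau$ with these cardinalities gives a facet. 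For $\cR_r$ we have $\sigma=\{1,\dots,r+1\}$ and $\tau=\{r+2,\dots,m-1\}$, with $m$ the missing index.

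\emph{Characterization.} Since $X\subset\cR_r$ and $Y\subset\cC_r$, and $\cC_r$ is exactly the complement of $\cR_r$ in $[m]\cup[\overline m]$, the set $F=(\cR_r\setminus X)\cup Y$ has $(r+1)-|X\cap[m]|+|Y\cap[m]|$ unbarred vertices, and similarly for barred ones. So the cardinality requirements for $F$ to be a facet are exactly (1) and (2).

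Next, check disjointness of the unbarred and barred parts of $F$. Take $i\in Y$ unbarred with $i\in\{r+2,\dots,m-1\}$. Then $\overline i\in\cR_r$, so we need $\overline i\in X$. Symmetrically, take $\overline i\in Y$ with $i\le r+1$. Then $i\in\cR_r$ must be removed, which is condition (3) read with the convention $\overline{\overline i}=i$. Finally, $m$ and $\overline m$ lie in neither $\cR_r$ nor the image of $X$. Hence disjointness fails precisely when both lie in $Y$, which is condition (4). Conversely, (1)--(4) give disjointness and the right cardinalities, so $F$ is a facet; this proves the equivalence. The only thing to be careful about is this bookkeeping of the bar convention for rows $\overline k$.

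\emph{The minor.} Take the matrix $\Lambda(\cA)=(I_{m-1}\mid \cA)$ and restrict it to the columns indexed by $F$. The columns from $\cR_r\setminus X$ are the standard basis vectors $e_i$ for $i\notin X$. Expanding the determinant along these columns (Laplace expansion, or permuting rows) leaves, up to sign, the square submatrix of $\cA$ with rows $X$ and columns $Y$; this is square since $|X|=|Y|$ by (1)--(2). Over $\Z_2$ signs are irrelevant, so
$$\det \Lambda(\cA)_{F}=[\cA]_{X,Y}.$$
Since $F$ is a facet and $\Lambda(\cA)$ is a characteristic matrix, these $m-1$ columns are linearly independent. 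Therefore the determinant is nonzero, hence equal to $1$ in $\Z_2$, which is \eqref{eq:minor_cal}.

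I expect no real obstacle here; the only care needed is the index bookkeeping in the characterization step.
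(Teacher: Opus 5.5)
Your proof is correct and takes the same route as the paper. You apply the facet description of $\Bier\Delta_r^{m-1}$ (cardinalities $r+1$ and $m-r-2$, no pair $\{i,\overline{i}\}$) to $(\cR_r\setminus X)\cup Y$ to get conditions (1)--(4), then identify $[\cA]_{X,Y}$ with the maximal minor of $\Lambda(\cA)$ on that facet's columns, which is $1$ by non-singularity; you simply derive the facet description and spell out the Laplace expansion, both of which the paper leaves implicit.
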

\begin{proof}
    A subset~$F \subset [m]\cup[\overline{m}]$ is a facet of~$\Bier\Delta_{r}^{m-1}$ if and only if
    $$
        |F \cap [m]| = r+1,\quad |F \cap [\overline{m}]| = m-r-2,\quad  \text{and} \quad \{i,\overline{i}\}\not\subset F \text{ for all }1 \leq i \leq m.
    $$
    Applying the above characterization of facets to~$F=(\cR_r\setminus X)\cup Y$, the first two conditions become~(1) and~(2), respectively, while the last condition becomes~(3) and~(4) together.
    Hence, the first statement follows immediately.

    Let~$\cR_r' = (\cR_r \setminus X)\cup Y$.
    Then,
    $$
        [\cA]_{X,Y} = [\Lambda(\cA)]_{\cR_r,\cR_r'} = 1.
    $$
\end{proof}

\begin{table}
    \centering
    \begin{tabular}{c p{0.38\textwidth} p{0.42\textwidth}}
        \hline
         & Replacement pair $(X,Y)$ & Consequence of $[\cA]_{X,Y}=1$ \\ \hline
        (1) & $(\{s\},\{m\})$ &$a_{s,m}=1$ \\
        (2) & $(\{\overline{t}\},\{\overline{m}\})$ & $a_{\overline{t},\overline{m}}=1$ \\
        (3) & $(\{s,\overline{t}\},\{t,\overline{s}\})$ & $a_{s,\overline{s}}a_{\overline{t},t}+a_{s,t}a_{\overline{t},\overline{s}}=1$ \\
        (4) & $(\{s,\overline{t}\},\{m,\overline{s}\})$ & $a_{s,\overline{s}}a_{\overline{t},m}+a_{\overline{t},\overline{s}}=1$ \\
        (5) & $(\{s,\overline{t}\},\{t,\overline{m}\})$ & $a_{s,t}+a_{s,\overline{m}}a_{\overline{t},t}=1$ \\
        (6) & $(\{s,\overline{t},\overline{t'}\}, \{t,\overline{m},\overline{s}\})$ & $\begin{aligned}[t] a_{\overline{t'},\overline{s}} &+a_{s,\overline{s}}a_{\overline{t'},t}+a_{s,\overline{m}}a_{\overline{t},\overline{s}}a_{\overline{t'},t}=0 \end{aligned}$ \\ \hline
    \end{tabular}
    \caption{Relations obtained from~\eqref{eq:minor_cal}.} \label{tab:minor-cal-relations}
\end{table}

The consequences of~\eqref{eq:minor_cal} for several small replacement pairs are summarized in Table~\ref{tab:minor-cal-relations}.
Throughout the table, the indices satisfy~$1\leq s\leq r+1$, $r+2\leq t, t'\leq m-1$ and~$t \neq t'$.
Relations~(1),~(2), and~(3) of Table~\ref{tab:minor-cal-relations} follow directly from~\eqref{eq:minor_cal}.
Relations~(4) and~(5) follow from~\eqref{eq:minor_cal} after using relations~(1) and~(2).
Finally, relation~(6) follows from~\eqref{eq:minor_cal} after using relation~(5), and hence indirectly from the preceding rows.

\begin{remark}
    When~$r=0$, the only possible choice of~$s$ is~$s=1$. 
    Thus, as~$t$ and~$t'$ vary, Table~\ref{tab:minor-cal-relations} contains all relations arising from~\eqref{eq:minor_cal}.
\end{remark}

In the same way as in relation~(6) of Table~\ref{tab:minor-cal-relations}, when~$r>0$, we have additional three-element replacement pairs.
For distinct~$1\leq s,s'\leq r+1$ and~$r+2\leq t\leq m-1$, taking~$(X,Y)=(\{s,s',\overline{t}\}, \{t,m,\overline{s}\})$ in~\eqref{eq:minor_cal}, we obtain
\begin{equation}\label{eq:minor-3b}
    a_{s,t} +a_{s,\overline{s'}}a_{\overline{t},t} +a_{\overline{t},m}a_{s',t}a_{s,\overline{s'}} = 0.
\end{equation}
The proof of Theorem~\ref{thm:r=0_sc} is based on the following three lemmas.

\begin{lemma}\label{lemma:main1}
    If~$\Lambda(\cA)$ satisfies the non-singularity condition and~$a_{s,\overline{s}}=0$ for some~$1 \leq s \leq r+1$, then~$\cA$ coincides with one of the following matrices.
    $$
        \cA = \begin{cases}
            A_v \quad \text{for } v \in \Z_2^{m-2}, &  \text{if } r=0,\\
            A_{(1,\ldots,1)} & \text{if } r>0.
        \end{cases}
    $$
\end{lemma}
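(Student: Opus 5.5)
We will determine $\cA$ column by column, using only the relations of Table~\ref{tab:minor-cal-relations} and \eqref{eq:minor-3b}, all of which come from Proposition~\ref{prop:minor}. Fix $s$ with $a_{s,\overline{s}}=0$. Relations~(1) and~(2) already give $a_{s',m}=1$ for every $s'\le r+1$ and $a_{\overline{t},\overline{m}}=1$ for every $t$.

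\emph{Step 1: the $s$th row and the $\overline{s}$ column.} Putting $a_{s,\overline{s}}=0$ into relation~(3) gives $a_{s,t}a_{\overline{t},\overline{s}}=1$, so $a_{s,t}=a_{\overline{t},\overline{s}}=1$ for all $t\in\{r+2,\dots,m-1\}$. Relation~(4) is then automatic. Relation~(5) becomes $1+a_{s,\overline{m}}a_{\overline{t},t}=1$, so $a_{s,\overline{m}}a_{\overline{t},t}=0$ for every $t$. Relation~(6) with $a_{s,\overline{s}}=0$ and $a_{\overline{t},\overline{s}}=1$ reads $1+a_{s,\overline{m}}a_{\overline{t'},t}=0$, so $a_{s,\overline{m}}=1$ and $a_{\overline{t'},t}=1$ for all $t\neq t'$; this uses $m-r-2\ge2$, which holds since $r\le\lfloor (m-3)/2\rfloor$ and $m\ge4$. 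Then (5) forces $a_{\overline{t},t}=0$. So far the $\overline{t}$-rows restricted to the columns $r+2,\dots,m-1$ form $J-I$, and the $s$th row is all ones except the $\overline{s}$ entry.

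\emph{Step 2: the remaining entries when $r=0$.} Here the rows are $1,\overline{2},\dots,\overline{m-1}$ and the only undetermined entries are $a_{\overline{t},m}$, which I set equal to $v_t$. Every relation in the table is now satisfied identically. By the remark after the table, these are all the relations from \eqref{eq:minor_cal} when $r=0$. So $\cA=A_v$, and since the lemma only claims a necessary condition, nothing more is needed. (Non-singularity of $A_v$ is already recorded in Remark~\ref{remark:main}.)

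\emph{Step 3: the case $r>0$.} We must pin down the rows $s'\neq s$ (with $s'\le r+1$), the columns $\overline{s'}$, and show $v=(1,\dots,1)$.
\begin{enumerate}
\item Apply \eqref{eq:minor-3b} with the roles arranged so that $a_{s,\overline{s}}=0$ enters: take the triple $(\{s',s,\overline{t}\},\{t,m,\overline{s'}\})$ and also $(\{s,s',\overline{t}\},\{t,m,\overline{s}\})$. Together with relations~(3)--(6) for $s'$ in place of $s$, this should give $a_{s',\overline{s'}}=0$, $a_{s',t}=1$ and $a_{\overline{t},\overline{s'}}=1$. Running Step~1 with $s'$ in place of $s$ then also gives $a_{s',\overline{m}}=1$.
\item To fix the off-diagonal entries $a_{s',\overline{s''}}$ for $s'\neq s''$, use \eqref{eq:minor-3b} again: with $a_{s,t}=1$ and $a_{\overline{t},t}=0$, it reduces to $1+a_{\overline{t},m}a_{s',t}a_{s,\overline{s'}}=0$. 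This yields $a_{s,\overline{s'}}=1$ and $a_{\overline{t},m}=1$, i.e.\ $v_t=1$ for all $t$; the entries $a_{s',\overline{s''}}=1$ follow by symmetry in $s,s'$.
\end{enumerate}

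The main obstacle is Step~3(1): showing $a_{s',\overline{s'}}=0$ for every $s'$, not just the given $s$. If the relations listed do not suffice, I would add one more replacement pair, such as $(\{s,s'\},\{\overline{s},\overline{s'}\})$ (invalid by condition~(1)) or $(\{s,s',\overline{t}\},\{t,\overline{s},\overline{s'}\})$ (also invalid by condition~(1), since $|X\cap[m]|=2$ but $|Y\cap[m]|=1$). The first valid extra pair to try is $(\{s,s',\overline t,\overline{t'}\},\{t,t',\overline s,\overline{s'}\})$, and I would expand its $4\times4$ minor using the entries already determined.
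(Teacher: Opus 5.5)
Your Steps 1 and 2 match the paper's proof, and Step 3 uses the same ingredients: relations (3)--(6) of Table~\ref{tab:minor-cal-relations} for the other indices $s'$, together with \eqref{eq:minor-3b}. So the route is the paper's. The one unfinished point is Step 3(1), which you leave at ``this should give'' and call the main obstacle. It closes in a few lines from what you already have, and no $4\times 4$ minor is needed.

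Your Step 3(2) does not depend on 3(1). With $a_{s,t}=1$ and $a_{\overline{t},t}=0$ from Step 1, \eqref{eq:minor-3b} gives $a_{\overline{t},m}\,a_{s',t}\,a_{s,\overline{s'}}=1$. Hence $a_{\overline{t},m}=a_{s',t}=a_{s,\overline{s'}}=1$ for every $t$ and every $s'\neq s$. (Alternatively, $a_{s',t}=1$ follows directly from relation (5) once $a_{\overline{t},t}=0$.)

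Do this first, and 3(1) follows:
\begin{itemize}
\item Relation (3) for $s'$ reduces to $a_{s',t}a_{\overline{t},\overline{s'}}=1$, so $a_{\overline{t},\overline{s'}}=1$.
\item Relation (4) for $s'$ reduces to $a_{s',\overline{s'}}a_{\overline{t},m}=0$, so $a_{s',\overline{s'}}=0$.
\item Now $s'$ satisfies the hypothesis of Step 1, so relation (6) gives $a_{s',\overline{m}}=1$.
\item Finally, \eqref{eq:minor-3b} for the pair $(s',s'')$ gives $a_{s',\overline{s''}}=1$.
\end{itemize}
This is exactly the paper's order. It substitutes $a_{\overline{t},t}=0$ into (3)--(5) for all $s$, then uses \eqref{eq:minor-3b} to get $a_{\overline{t},m}=a_{s,\overline{s'}}=1$. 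From this it deduces $a_{s,\overline{s}}=0$ for all $s$ and finishes with (6).

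One small inaccuracy: at the end of Step 1 you say the $s$th row is all ones except at $\overline{s}$. For $r>0$, the entries $a_{s,\overline{s'}}$ with $s'\neq s$ are not yet determined at that point; they come only from Step 3(2).
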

\begin{proof}
    Take~$s_0 \in \{1,\ldots,r+1\}$ with~$a_{s_0,\overline{s_0}}=0$.
    After putting~$s=s_0$ in Table~\ref{tab:minor-cal-relations},
    the resulting equations, simplified successively, are precisely the following identities
    $$
        \begin{cases}
            a_{s_0,m} = a_{\overline{t},\overline{m}} = a_{s_0,t} = a_{\overline{t},\overline{s_0}} = a_{s_0,\overline{m}} =1, \quad a_{\overline{t},t}=0 &  \text{if } r+2 \leq t \leq m-1,\\
            a_{\overline{t'},t}=1& \text{if } r+2 \leq t,t' \leq m-1 \text{ with } t \neq t'.
        \end{cases}
    $$
    
    When~$r= 0$, we may write
    $$
        \cA = \begin{pNiceArray}{c:c:c:c}[first-row,margin,code-for-first-row=\scriptstyle] 
            \begin{array}{*{3}{>{\scriptstyle}c}} \overline{1} \end{array} & \begin{array}{*{3}{>{\scriptstyle}c}} 2 & \cdots & m-1 \end{array} & m & \overline{m}\\
            0 & J_{1,m-2}  & 1 & 1\\ \hdashline
             & &  & \\
            J_{m-2,1} & J_{m-2}-I_{m-2}& v^T &J_{m-2,1} \\
             & & &
        \end{pNiceArray},
    $$
    where~$v$ is an arbitrary vector in~$\Z_2^{m-2}$.
    Indeed, none of the identities obtained above imposes any condition on~$v$.
    
    Now, assume that~$r > 0$.
    Substituting~$a_{\overline{t},t}=0$ into~(3), (4), and (5) in Table~\ref{tab:minor-cal-relations}, for each~$1 \leq s \leq r+1 < t \leq m-1$, we have~$a_{s,t}a_{\overline{t},\overline{s}} = a_{s,\overline{s}}a_{\overline{t},m} + a_{\overline{t},\overline{s}} = a_{s,t} = 1$.
    Thus,
    $$
        \cA = \begin{pNiceArray}{c:c:c:c}[first-row,margin,code-for-first-row=\scriptstyle]
            \begin{array}{*{3}{>{\scriptstyle}c}} \overline{1} & \cdots & \overline{r+1} \end{array} & \begin{array}{*{3}{>{\scriptstyle}c}} r+2 & \cdots & m-1 \end{array} & m & \overline{m}\\
            &&&\\
            \ast & J_{r+1,m-r-2}  & J_{r+1,1} & \ast \\
            &&&\\ \hdashline
            &&&\\
            J_{m-r-2,r+1} & J_{m-r-2}-I_{m-r-2}& \ast &J_{m-r-2,1} \\
            &&&
        \end{pNiceArray}. 
    $$
    Moreover, $a_{s,\overline{s}}a_{\overline{t},m}=0$ for each~$1 \leq s \leq r+1 < t \leq m-1$.
    Here the starred blocks denote entries that are not determined at this stage.
    Combining the results obtained so far with~\eqref{eq:minor-3b}, we get~$a_{\overline{t},m} = a_{s,\overline{s'}} = 1$ for all distinct~$1\leq s,s'\leq r+1$ and all~$r+2\leq t\leq m-1$.
    Using this in the relation~$a_{s,\overline{s}}a_{\overline{t},m}=0$, we get~$a_{s,\overline{s}}=0$ for all~$1\leq s\leq r+1$.
    Finally, substituting these relations into~(6) in Table~\ref{tab:minor-cal-relations} gives~$a_{s,\overline{m}} = 1$ for all~$1\leq s\leq r+1$.
    Therefore, $\cA$ coincides with~$A_{(1,\ldots,1)}$.
\end{proof}

\begin{lemma}\label{lemma:main2}
    Assume that
    \begin{enumerate}
        \item $\Lambda(\cA)$ satisfies the non-singularity condition,
        \item $a_{s,\overline{s}}=1$ for all~$1 \leq s \leq r+1$, and
        \item $a_{s,\overline{m}} = 0$ for some~$1 \leq s \leq r+1$.
    \end{enumerate}
    Then the following conclusions hold. 
    If~$r=0$, then~$\cA = B^1_v$ for some~$v\in \Z_2^{m-2}$. 
    If~$r>0$, then~$\cA = B^j_{\left(0,\ldots,0\right)}$ for some~$j\in \{1,\ldots,r+1 \}$.
\end{lemma}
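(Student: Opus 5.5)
Following the pattern of the proof of Lemma~\ref{lemma:main1}, I would fix $s_0\in\{1,\ldots,r+1\}$ with $a_{s_0,\overline{m}}=0$ and substitute $s=s_0$ into the relations of Table~\ref{tab:minor-cal-relations}, using $a_{s,\overline{s}}=1$ for all $s$.

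\textbf{Step 1: the rows $\overline t$.} Relations~(1) and~(2) give $a_{s_0,m}=1$ and $a_{\overline t,\overline m}=1$. Relation~(5) with $a_{s_0,\overline m}=0$ gives $a_{s_0,t}=1$ for all $r+2\le t\le m-1$. Relation~(6) then gives $a_{\overline{t'},\overline{s_0}}=a_{\overline{t'},t}$ for $t\neq t'$. Relation~(4) gives $a_{\overline t,m}=1+a_{\overline t,\overline{s_0}}$. Relation~(3) gives $a_{\overline t,t}+a_{\overline t,\overline{s_0}}=1$.

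Now set $v_t:=a_{\overline t,\overline{s_0}}$. Then each row $\overline t$ of $\cA$ reads $v_t$ in every column $t'\neq t$ with $t'\ge r+2$, reads $1+v_t$ in column $t$, reads $v_t+1$ in column $m$, and reads $1$ in column $\overline m$. This is exactly the lower block of~\eqref{r=0_case3}, at least in the columns $\overline{s_0}$ and $r+2,\ldots,m-1$. The row $s_0$ equals~\eqref{eq:B_s0}, with the possible exception of the entries $a_{s_0,\overline{s'}}$ for $s'\neq s_0$.

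\textbf{Step 2: the case $r=0$.} Here $s_0=1$ is the only choice, so these relations are all that Table~\ref{tab:minor-cal-relations} provides, and together they give $\cA=B^1_v$ with $v$ free. As a sanity check I would confirm that no relation constrains $v$; Remark~\ref{remark:main} asserts that all such matrices are characteristic.

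\textbf{Step 3: the case $r>0$.} Here I still need to determine the other rows $s\neq s_0$, the entries $a_{\overline t,\overline{s'}}$, and $a_{s_0,\overline{s'}}$, and to show that $v=0$. I would use~\eqref{eq:minor-3b} with the various choices of $(s,s')$ among $s_0$ and the other indices, together with relations~(3)--(6) applied to $s\neq s_0$. Applying~\eqref{eq:minor-3b} with $s=s_0$ gives
\[
1+a_{s_0,\overline{s'}}(1+v_t)+(1+v_t)a_{s',t}a_{s_0,\overline{s'}}=0 .
\]
This forces $v_t=0$ and $a_{s_0,\overline{s'}}(1+a_{s',t})=1$, so $a_{s_0,\overline{s'}}=1$ and $a_{s',t}=0$. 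Relations~(5) and~(3) for $s'$ then give $a_{s',\overline m}a_{\overline t,t}=1$, so $a_{s',\overline m}=1$; they also give $a_{\overline t,\overline{s'}}=0$ (since $a_{\overline t,t}=1$ once $v=0$). Relation~(4) is consistent with $a_{\overline t,m}=1$, which is what the lower block of~\eqref{r=0_case3} requires when $v=0$. Finally, \eqref{eq:minor-3b} with $s=s'$ and $s''\neq s'$ should force $a_{s',\overline{s''}}=0$, so that each row $s'\neq s_0$ is the $s'$-th row of $[I_{r+1}\,|\,0\,|\,J\,|\,J]$. Also $a_{s',m}=1$ by relation~(1). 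Hence $\cA=B^{s_0}_{(0,\ldots,0)}$.

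The main obstacle is this bookkeeping for $r>0$. One must choose the order in which the three-element relations~\eqref{eq:minor-3b} are applied so that each substitution linearizes the next relation. I expect the first application above, with $s=s_0$, to be the key step, since it kills $v$; after that the remaining entries should fall out by direct substitution.
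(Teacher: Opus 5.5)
Most of your argument is sound. Step~1 and the case $r=0$ are correct. For $r>0$, your substitution into \eqref{eq:minor-3b} with $s=s_0$ gives $(1+v_t)\,a_{s_0,\overline{s'}}\,(1+a_{s',t})=1$. This is a valid and more direct route than the paper's proof by contradiction, and it yields $v=0$, $a_{s_0,\overline{s'}}=1$ and $a_{s',t}=0$ at once. There is one small mis-citation. Once $a_{s',t}=0$, relation~(3) for $s'$ collapses to $1=1$. The identity $a_{\overline t,\overline{s'}}=0$ instead comes from relation~(4) for $s'$, which reads $a_{\overline t,m}+a_{\overline t,\overline{s'}}=1$ with $a_{\overline t,m}=1$.

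The genuine gap is your final step. You assert that \eqref{eq:minor-3b} with $s=s'\neq s_0$ and any $s''\neq s'$ forces $a_{s',\overline{s''}}=0$. This holds for $s''\neq s_0$, where $a_{s'',t}=0$. For $s''=s_0$, however, you have $a_{s_0,t}=a_{\overline t,t}=a_{\overline t,m}=1$ and $a_{s',t}=0$, so the relation reads $a_{s',\overline{s_0}}+a_{s',\overline{s_0}}=0$ and says nothing. Thus the entries $a_{s',\overline{s_0}}$ with $s'\neq s_0$ are never determined, although they must vanish in $B^{s_0}_{(0,\ldots,0)}$. For $r=1$, this is the only off-diagonal entry of row $s'$ in the columns $\overline{1},\ldots,\overline{r+1}$.

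More bookkeeping with the tools you list cannot close this. Relations (1)--(6) of Table~\ref{tab:minor-cal-relations} never involve $a_{s,\overline{s'}}$ with $s\neq s'$. By condition~(3) of Proposition~\ref{prop:minor}, any replacement pair with $s'\in X$ and $\overline{s_0}\in Y$ has $s_0\in X$. So the only such pairs of size at most three are $(\{s',s_0,\overline t\},\{t,m,\overline{s_0}\})$, and these give exactly the vacuous relation above.

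The missing idea is a $4\times 4$ minor. The paper takes $(X,Y)=(\{s',s_0,\overline t,\overline{t'}\},\{t,t',\overline{s'},\overline{s_0}\})$ with distinct $t,t'\in\{r+2,\ldots,m-1\}$; these exist since $m-r-2\geq 2$. With the entries you have already determined, this minor is block triangular with value $1+a_{s',\overline{s_0}}$. Proposition~\ref{prop:minor} then gives $a_{s',\overline{s_0}}=0$, which completes the identification $\cA=B^{s_0}_{(0,\ldots,0)}$.
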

\begin{proof}
Take~$s_0\in\{1,\ldots,r+1\}$ with~$a_{s_0,\overline{m}}=0$.
Applying the identities in Table~\ref{tab:minor-cal-relations} with $s=s_0$ yields the relations
\begin{equation}\label{eq:lemma2-1}
    \begin{cases}
        a_{s_0,m} = a_{s_0,t} = a_{\overline{t},\overline{m}} = 1, \text{ and } \\
        a_{\overline{t},m}+1 = a_{\overline{t},t}+1= a_{\overline{t},t'} = a_{\overline{t},\overline{s}} = a_{\overline{t},\overline{s_0}}
    \end{cases}
\end{equation}
for all~$1\leq s\leq r+1$ and distinct~$r+2\leq t,t'\leq m-1$.
The first condition in~\eqref{eq:lemma2-1} determines the row of~$\cA$ indexed by~$s_0$ as
$$
    \begin{pNiceArray}{c:c:cc}[first-row,margin,code-for-first-row=\scriptstyle]
        \begin{array}{*{6}{>{\scriptstyle}c}} \overline{1} & \cdots & \overline{r+1} \end{array} & \begin{array}{*{6}{>{\scriptstyle}c}} r+2 & \cdots & m-1 \end{array} & m & \overline{m}\\
        \ast & J_{1,m-r-2} & 1 & 0
    \end{pNiceArray},
$$
where the starred block denotes entries that are not yet determined, with the only exception that~$a_{s_0,\overline{s_0}}=1$.
Setting
$$
    v= (a_{\overline{r+2},\overline{s_0}},\ldots, a_{\overline{m-1},\overline{s_0}}) \in \Z_2^{m-r-2},
$$
the second set of relations in~\eqref{eq:lemma2-1} implies that, for each~$r+2\leq t\leq m-1$, the row of~$\cA$ indexed by~$\overline{t}$ is equal to the corresponding row of the matrix in~\eqref{r=0_case3}.
For~$r=0$, this exhausts all rows to be determined, immediately yielding~$\cA=B_v^1$.

Assume now that~$r>0$, and suppose, for the sake of contradiction, that~$a_{\overline{t_0},\overline{s_0}}=1$ for some index~$t_0 \in \{r+2,\ldots,m-1\}$.
The second condition in~\eqref{eq:lemma2-1} necessitates~$a_{\overline{t_0},t_0} = a_{\overline{t_0},m} = 0$.
In conjunction with (5) of Table~\ref{tab:minor-cal-relations}, this implies~$a_{s,t_0} = 1$ for all~$1 \leq s \leq r+1$.
Because~$r>0$, we may choose an index~$s'\neq s_0$, whereupon evaluating~\eqref{eq:minor-3b} at these indices yields~$1=0$, a contradiction.
Thus, $a_{\overline{t},\overline{s_0}}$ must vanish for all~$r+2\leq t\leq m-1$, implying that~$v$ is the zero vector.
As a result, the second block of relations in~\eqref{eq:lemma2-1} simplifies to~$a_{\overline{t},m} = a_{\overline{t},t}= 1$ and~$a_{\overline{t},t'} = a_{\overline{t},\overline{s}} = 0$ for any~$1\leq s\leq r+1$ and distinct~$r+2\leq t,t'\leq m-1$.

Next, fix~$s_1 \in\{1,\ldots,r+1\}\setminus\{s_0\}$ and distinct~$t,t'\in\{r+2,\ldots,m-1\}$.
Putting~$s=s_1$ and~$s'=s_0$ in~\eqref{eq:minor-3b}, and using~$a_{s_0,t}=1$, we obtain~$a_{s_1,t}=0$.  
Substituting this into (5) of Table~\ref{tab:minor-cal-relations} with~$s=s_1$, we get~$a_{s_1,\overline{m}}=1$. 
Thus,
\begin{equation}\label{eq:lemma2-2}
    a_{s_1,t}=0, \quad \text{ and } \quad a_{s_1,\overline{m}}=1.
\end{equation}
Now putting~$s=s_0$ and~$s'=s_1$ in~\eqref{eq:minor-3b}, and using~\eqref{eq:lemma2-2}, we further obtain~$a_{s_0,\overline{s_1}}=1$.
In conclusion, the row of~$\cA$ indexed by~$s_0$ coincides with the vector in~\eqref{eq:B_s0}.
When~$r\geq 2$, choose~$s_2\in\{1,\ldots,r+1\}\setminus\{s_0,s_1\}$.
Putting~$s=s_1$ and~$s'=s_2$ in~\eqref{eq:minor-3b}, and using~\eqref{eq:lemma2-2}, we obtain~$a_{s_1,\overline{s_2}}=0$.
Setting up the replacement pair~$(X,Y) = \big(\{s_1,s_0,\overline{t},\overline{t'}\}, \{t,t',\overline{s_1},\overline{s_0}\}\big)$, the previously derived entries reduce the corresponding minor to
$$
    [\cA]_{X,Y} = \det
        \begin{pmatrix}
            1 & 1 & 1 & 1 \\
            a_{s_1,\overline{s_0}} & 1 & 0 & 0 \\
            0 & 0 & 1 & 0 \\
            0 & 0 & 1 & 1
        \end{pmatrix}
    = 1+a_{s_1,\overline{s_0}}.
$$
By Proposition~\ref{prop:minor}, the identity~$[\cA]_{X,Y}=1$ implies~$a_{s_1,\overline{s_0}}=0$.  
Moreover, by Table~\ref{tab:minor-cal-relations}~(1), we have~$a_{s_1,m}=1$.
Hence the row of~$\cA$ indexed by~$s_1$ coincides with the row of~$B_{(0,\ldots,0)}^{s_0}$ indexed by~$s_1$.
Since~$s_1$ was arbitrary in~$\{1,\ldots,r+1\}\setminus\{s_0\}$, we conclude that~$\cA=B_{(0,\ldots,0)}^{s_0}$, as desired.
\end{proof}

\begin{lemma}\label{lemma:main3}
    Assume that 
    \begin{enumerate}
        \item $\Lambda(\cA)$ satisfies the non-singularity condition,
        \item $a_{i,\overline{i}}=a_{i,\overline{m}}=1$ for all $1\leq i\leq r+1$, and
        \item $a_{\overline{t},m}=0$ for some $r+2\leq t\leq m-1$.
    \end{enumerate}
    Fix such an index~$t$. 
    Then the following conclusions hold. 
    If~$r=0$, then~$\cA=C_v^t$ for some~$v\in \Z_2^{m-2}$. 
    If~$r>0$, then~$\cA=C_v^t$, where~$v=(v_{r+2},\ldots,v_{m-1})\in \Z_2^{m-r-2}$ is determined by~$v_t=1$ and~$v_i=0$ for all~$i\neq t$.
\end{lemma}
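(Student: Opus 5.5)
We follow the pattern of Lemmas~\ref{lemma:main1} and~\ref{lemma:main2}: fix the index $t$ with $a_{\overline{t},m}=0$, feed the hypotheses into the relations of Table~\ref{tab:minor-cal-relations} and into~\eqref{eq:minor-3b}, and read off $\cA$ row by row. By Table~\ref{tab:minor-cal-relations}~(1) and~(2), every row indexed by $s\in\{1,\ldots,r+1\}$ has $a_{s,m}=1$, and every row indexed by $\overline{t'}$ has $a_{\overline{t'},\overline{m}}=1$. Since $a_{s,\overline{s}}=1$, relation~(4) with the fixed $t$ gives $a_{\overline{t},\overline{s}}=1$ for all $s$. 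Relation~(5) with $a_{s,\overline{m}}=1$ gives
\[
a_{s,t'}=1+a_{\overline{t'},t'}
\]
for all $s$ and all $t'$. Relation~(3) then becomes
\[
a_{\overline{t'},t'}+(1+a_{\overline{t'},t'})\,a_{\overline{t'},\overline{s}}=1 .
\]
Taking $t'=t$ and using $a_{\overline{t},\overline{s}}=1$, the $a_{\overline{t},t}$ terms cancel and the relation holds automatically. Hence $a_{\overline{t},t}$ is a free parameter, and we set $v_t:=a_{\overline{t},t}$.

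Relation~(6) with $t$ in the first slot and $t'$ in the second reads
\[
a_{\overline{t'},\overline{s}}+a_{\overline{t'},t}+a_{\overline{t'},t}=0,
\]
so $a_{\overline{t'},\overline{s}}=0$ for every $t'\neq t$. Relation~(3) then forces $a_{\overline{t'},t'}=1$, and hence $a_{s,t'}=0$. Relation~(4) with $t'$ gives $a_{\overline{t'},m}=1+a_{\overline{t'},\overline{s}}=1$.

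Relation~(6) with the roles of $t$ and $t'$ swapped reads
\[
a_{\overline{t},\overline{s}}+a_{\overline{t},t'}+a_{\overline{t'},\overline{s}}\,a_{\overline{t},t'}=0,
\]
so $a_{\overline{t},t'}=1$. For two indices $t',t''$ both different from $t$, relation~(6) gives $a_{\overline{t''},t'}=0$.

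The remaining undetermined entries are $a_{\overline{t'},t}$ for $t'\neq t$ and the off-diagonal entries $a_{s,\overline{s'}}$ when $r>0$. We set $v_{t'}:=a_{\overline{t'},t}$ for $t'\neq t$, and write $a_{s,t}=1+v_t$. Comparing with the definition of $C^t_v$: the $t$th column of $A_0$ plus $(1+v_t,\ldots,1+v_t,v)^T$ gives column $t$, and adding $(J_{1,m-1},1,0)$ to row $\overline{t}$ produces row $\overline{t}$ (entries $1$, diagonal $v_t$ after the column correction, $m$-entry $0$, $\overline{m}$-entry $1$). Checking entry by entry, this should be exactly $C^t_v$. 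For $r=0$ nothing further is constrained, so $v\in\Z_2^{m-2}$ is arbitrary, as the lemma claims. Checking the $(\overline{t},t)$ entry needs care with the double addition; this bookkeeping is the first place to double-check.

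For $r>0$ we use~\eqref{eq:minor-3b}, which is the key step. At the fixed $t$, with $a_{\overline{t},m}=0$, it gives
\[
a_{s,t}=a_{s,\overline{s'}}\,a_{\overline{t},t}.
\]
Evaluated at $t'\neq t$ (where $a_{s,t'}=0$, $a_{\overline{t'},t'}=1$, $a_{\overline{t'},m}=1$), it gives $a_{s,\overline{s'}}(1+0)=0$, so $a_{s,\overline{s'}}=0$; this fits $C^t_v$, whose upper-left block is $I_{r+1}$. Substituting back yields $a_{s,t}=0$, hence $v_t=1$.

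It remains to show $v_{t'}=0$ for $t'\neq t$. This is the main obstacle, since none of the small relations involve $a_{\overline{t'},t}$ nontrivially. We plan to try the four-element replacement pair
\[
\bigl(\{s,s',\overline{t},\overline{t'}\},\{t,t',\overline{s},\overline{s'}\}\bigr),
\]
modelled on the end of the proof of Lemma~\ref{lemma:main2}, or the pair $(\{s,s',\overline{t'}\},\{t,m,\overline{s}\})$. We expect the minor to reduce to $1+a_{\overline{t'},t}$, which forces $v_{t'}=0$ and completes the identification $\cA=C^t_v$ with $v=e_t$.
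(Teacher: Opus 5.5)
Your argument follows the paper's proof. You use the same relations from Table~\ref{tab:minor-cal-relations} to fix every entry except $a_{\overline{t'},t}$ and, for $r>0$, $a_{s,\overline{s'}}$. Relation~\eqref{eq:minor-3b} then gives $a_{s,\overline{s'}}=0$ and $v_t=1$. The paper reads both off \eqref{eq:minor-3b} at the fixed $t$ alone, where it becomes $1+v_t=a_{s,\overline{s'}}v_t$. Your detour through $t'\neq t$ is equally valid. The paper finishes with the same four-element replacement pair you propose.

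Your bookkeeping for $C^t_v$ is correct: the $(\overline{t},t)$ entry is $1+v_t+1=v_t$.

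The one step you only announce needs to be carried out, and it does go through. Take rows $s,s',\overline{t},\overline{t'}$ and columns $\overline{s},\overline{s'},t,t'$. Using the entries you have already determined, including $a_{s,t}=0$ and $a_{\overline{t},t}=1$, you get
\[
[\cA]_{X,Y}=\det\begin{pmatrix}1&0&0&0\\0&1&0&0\\1&1&1&1\\0&0&v_{t'}&1\end{pmatrix}=1+v_{t'},
\]
so $v_{t'}=0$.

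Discard your fallback pair $(\{s,s',\overline{t'}\},\{t,m,\overline{s}\})$. It is not a replacement pair: $t\in Y$ but $\overline{t}\notin X$, which violates condition~(3) of Proposition~\ref{prop:minor}.
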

\begin{proof}
    Take~$t_0 \in \{r+2, \ldots, m-1\}$ such that~$a_{\overline{t_0}, m} = 0$, and~$t_1\in \{r+2,\ldots,m-1\}\setminus\{t_0\}$.
    Set~$v_t := a_{\overline{t},t_0}$ for~$r+2 \leq t \leq m-1$, and~$v := (v_{r+2},\ldots,v_{m-1})$.
    We apply the following relations from
    Table~\ref{tab:minor-cal-relations}:
    \begin{itemize}
        \item relation~(1);
        \item relations~(2), (3), (4), and~(5), with $t=t_0$ and $t=t_1$ in each relation; and
        \item relation~(6) with $(t,t')=(t_1,t_0)$.
    \end{itemize}
    Together with the assumptions of this lemma, these give, for each~$1\leq s\leq r+1$,
    $$
        \begin{cases}
            a_{s,\overline{s}}=a_{s,m}=a_{s,\overline{m}}=1, \quad a_{s,t_1}=0, \quad a_{s,t_0}= v_{t_0}+1; \\
            a_{\overline{t_1},\overline{s}}=0, \quad a_{\overline{t_1},t_1} = a_{\overline{t_1},m}= a_{\overline{t_1},\overline{m}}=1, \quad a_{\overline{t_1},t_0} = v_{t_1};\\
            a_{\overline{t_0},\overline{s}} = a_{\overline{t_0},t_1}= a_{\overline{t_0},\overline{m}}=1, \quad a_{\overline{t_0},m} = 0, \quad a_{\overline{t_0},t_0} = v_{t_0}.
        \end{cases}
    $$
    Moreover, if~$|\{r+2,\ldots,m-1\}| \geq 3$, then for~$t_2\in \{r+2,\ldots,m-1\}\setminus\{t_0,t_1\}$, applying (6) of Table~\ref{tab:minor-cal-relations} with $(t,t')=(t_1,t_2)$ gives~$a_{\overline{t_1},t_2}=0$.
    When~$r=0$, reading off the entries above gives~$\cA=C_v^{t_0}$.
    There is no further restriction on~$v$, so~$v$ is arbitrary.
    This completes the proof in the case~$r=0$.
    
    Assume now that~$r>0$.
    Evaluating \eqref{eq:minor-3b} at~$t=t_0$ and combining it with the prior identity~$a_{s,t_0} = v_{t_0}+ 1$, for distinct indices~$s,s'\in\{1,\ldots,r+1\}$, we have
    $$
        v_{t_0}=1,\quad a_{s,t_0}=0,\quad \text{ and } \quad a_{s,\overline{s'}}=0.
    $$
    Let~$t\in \{r+2,\ldots,m-1\}\setminus\{t_0\}$, and choose distinct~$s,s'\in \{1,\ldots,r+1\}$. 
    Consider the replacement pair~$(X,Y)=(\{s,s',\overline{t},\overline{t_0}\}, \{t,t_0,\overline{s},\overline{s'}\})$.
    Using the entries obtained above, the corresponding minor is
    $$
        [\cA]_{X,Y} = \det
            \begin{pmatrix}
                1 & 0 & 0 & 0 \\
                0 & 1 & 0 & 0 \\
                0 & 0 & 1 & v_t \\
                1 & 1 & 1 & 1
            \end{pmatrix}
        = 1 + a_{\overline{t},t_0}.
    $$
    By Proposition~\ref{prop:minor}, the identity~$[\cA]_{X,Y} = 1$ implies that~$v_t=0$ for all~$t\neq t_0$, as desired.
\end{proof}

We now prove Theorem~\ref{thm:r=0_sc}.
\begin{proof}[Proof of Theorem~\ref{thm:r=0_sc}]
    It is straightforward to check that~$\Lambda(A_0)$ is a characteristic matrix of~$\Bier\Delta^{m-1}_r$ for~$0\leq r\leq \left\lfloor (m-3)/2\right\rfloor$.
    By Remark~\ref{remark:main}, for each matrix~$\cA$ given in cases~(2), (3), and~(4) of this theorem, the matrix~$\Lambda(\cA)$ is a characteristic matrix.
    
    It remains to show that no other cases occur. 
    By Lemmas~\ref{lemma:main1}, \ref{lemma:main2}, and~\ref{lemma:main3}, all possibilities fall under cases~(2), (3), and~(4), except when~$a_{s,\overline{s}}=a_{s,\overline{m}}=1$ for all~$1\leq s\leq r+1$ and~$a_{\overline{t},m}=1$ for all~$r+2\leq t\leq m-1$.
    In this remaining case, one checks directly that~$\cA=A_0$.
\end{proof}

\section{Topology of small covers over~$\Bier \Delta_{r}^{m-1}$}\label{sec4}

For a characteristic matrix~$\Lambda$ over~$\Bier\Delta^{m-1}_{r}$, we denote by
$$
    M(\Bier\Delta^{m-1}_{r},\Lambda)
$$
the small cover associated with the pair~$(\Bier\Delta^{m-1}_{r},\Lambda)$.
In this section, we investigate the topology of~$M(\Bier\Delta^{m-1}_{r},\Lambda)$, starting with the case~$r \in \{0,m-3\}$.

We first recall the connected sum construction for simplicial spheres.
Let~$\Gamma_1$ and~$\Gamma_2$ be~$(n-1)$-dimensional simplicial spheres, and let~$\sigma$ be a facet of both~$\Gamma_1$ and~$\Gamma_2$.
The connected sum~$\Gamma_1\#_{\sigma}\Gamma_2$ is obtained from~$\Gamma_1\cup\Gamma_2$ by identifying the two copies of~$\sigma$ and then deleting the common facet.

For closed~$n$-manifolds~$M_1$ and~$M_2$, $M_1 \# M_2$ denotes their connected sum.
We also write~$\#^k M$ for the connected sum of~$k$ copies of a closed~$n$-manifold~$M$.
\begin{proposition}[\cite{Davis-Januszkiewicz1991}] \label{eqn:connected_sum}
    Let~$\Lambda$ be a characteristic matrix over a simplicial complex~$\Gamma_1 \#_{\sigma} \Gamma_2$.
    Let~$\Lambda_i$ denote the submatrix of~$\Lambda$ induced by the vertex set of~$\Gamma_i$ for~$i=1,2$.
    If the submatrix of~$\Lambda$ with columns indexed by~$\sigma$ has determinant~$1$, then~$\Lambda_i$ satisfies the non-singularity condition for~$i=1,2$.
    Moreover, there is a homeomorphism
    $$
        M(\Gamma_1\#_{\sigma}\Gamma_2,\Lambda) \cong M(\Gamma_1,\Lambda_1)\# M(\Gamma_2,\Lambda_2).
    $$
\end{proposition}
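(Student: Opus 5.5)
The plan is to handle the two assertions separately: non-singularity by a direct face count, and the homeomorphism by cutting each $M(\Gamma_i,\Lambda_i)$ along the preimage of a small neighbourhood of the "corner" dual to $\sigma$, then regluing.

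\emph{Non-singularity.} Every face of $\Gamma_i$ other than $\sigma$ itself is a face of $\Gamma_1\#_\sigma\Gamma_2$. This is because the connected sum deletes only the common facet $\sigma$, while all its proper faces survive. Hence the columns of $\Lambda_i$ indexed by such a face are linearly independent, since $\Lambda$ is a characteristic matrix. For $\sigma$ itself, the columns indexed by $\sigma$ form an $n\times n$ matrix with determinant $1$, so they are independent as well. Thus each $\Lambda_i$ is a characteristic matrix over $\Gamma_i$.

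\emph{Homeomorphism.} I would use the standard cubical description of $M(\Gamma,\lambda)=\RZ_\Gamma/\ker\lambda$. Let $C_\Gamma$ be the cone on the barycentric subdivision of $\Gamma$, cubically decomposed into the cubes $C_\tau$ for facets $\tau\in\Gamma$; this plays the role of the dual polytope even when $\Gamma$ is only a simplicial sphere. Then
$$
M(\Gamma,\lambda)=\bigl(C_\Gamma\times\Z_2^n\bigr)/\sim,
$$
where $(x,g)\sim(x,h)$ if and only if $g-h$ lies in the span of $\{\lambda(e_j)\}$, with $j$ running over the vertices whose "facets" contain $x$.

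Now consider the cube $C_\sigma\subset C_{\Gamma_i}$. The preimage $U_i$ of a small corner neighbourhood $N_\sigma$ of the vertex dual to $\sigma$ is $N_\sigma\times\Z_2^n/\sim$. Since the columns of $\Lambda$ indexed by $\sigma$ form a basis of $\Z_2^n$, we can identify $\Z_2^n$ with $\Z_2^\sigma$, and $U_i$ becomes the standard $2^n$ reflected copies of a corner. This is an $n$-ball whose boundary is the $2^n$ copies of an $(n-1)$-simplex, i.e.\ an octahedral $S^{n-1}$. Therefore $M(\Gamma_i,\Lambda_i)\setminus \operatorname{int}U_i$ is $M(\Gamma_i,\Lambda_i)$ minus an open ball.

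Next, $C_{\Gamma_1\#_\sigma\Gamma_2}$ is obtained from $C_{\Gamma_1}\setminus N_\sigma$ and $C_{\Gamma_2}\setminus N_\sigma$ by gluing along the simplicial cut faces, matching faces with the same label in $\sigma$. This is the combinatorial counterpart of the connected sum of simple polytopes at a vertex. Over each piece, the characteristic data of $\Lambda$ agrees with that of $\Lambda_i$, because $\Lambda_i$ is just the restriction of $\Lambda$ to the columns of $\Gamma_i$. Hence the quotient construction respects the decomposition:
$$
M(\Gamma_1\#_\sigma\Gamma_2,\Lambda)\cong\bigl(M_1\setminus\operatorname{int}U_1\bigr)\cup_{S^{n-1}}\bigl(M_2\setminus\operatorname{int}U_2\bigr),
$$
where $M_i=M(\Gamma_i,\Lambda_i)$.

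\emph{Main obstacle.} The hard part will be checking that this gluing really is the connected sum. One must verify that the identification of the two boundary spheres is the same octahedral sphere map on both sides. It is, because both sides use the identical basis $\{\lambda(e_j)\}_{j\in\sigma}$ and the identical labelling of the cut faces by $\sigma$. The gluing map is then a homeomorphism of $S^{n-1}$ that extends over the balls $U_i$. Since connected sum does not depend on the chosen balls, and (when the $M_i$ are orientable) any orientation issue can be absorbed by choosing the gluing appropriately, we obtain $M(\Gamma_1\#_\sigma\Gamma_2,\Lambda)\cong M_1\# M_2$.
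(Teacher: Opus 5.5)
The paper gives no proof of this proposition and simply cites Davis--Januszkiewicz. Your argument is the standard one behind that citation: non-singularity by the face count, then excising the $n$-ball over a corner neighbourhood of the vertex dual to $\sigma$ and regluing along the octahedral sphere, with both sides using the same basis $\{\lambda(e_j)\}_{j\in\sigma}$. It is correct.

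The one step you assert rather than verify is the face-preserving identification of $C_{\Gamma_1\#_\sigma\Gamma_2}$ with the two truncated cones glued along their cut simplices. For polytopal spheres, the only case the paper uses, this is just the usual vertex connected sum of simple polytopes. In general it amounts to checking that the two truncated copies of $C_\sigma$, glued along the cut simplex, form a face-respecting collar of $\operatorname{cone}((\partial\sigma)')$, which is the common part of the remaining cubes.
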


\begin{figure}
    \centering
    \begin{tikzpicture}[
      scale=1.05,
      every node/.style={font=\small},
      vtx/.style={circle, fill=black, inner sep=1.5pt},
      edge/.style={line width=0.8pt},
      glue/.style={dashed, line width=0.7pt}
    ]
    
    \begin{scope}[shift={(-3.1,0)}]
      \node at (-1.5,2.0) {\Large{$M$}};
    
      \foreach \i/\lab in {
        90/{\scriptsize$\begin{pmatrix}1\\0\end{pmatrix}$},
        30/{\scriptsize$\begin{pmatrix}0\\1\end{pmatrix}$},
        -30/{\scriptsize$\begin{pmatrix}1\\0\end{pmatrix}$},
        -90/{\scriptsize$\begin{pmatrix}1\\1\end{pmatrix}$},
        -150/{\scriptsize$\begin{pmatrix}1\\0\end{pmatrix}$},
        150/{\scriptsize$\begin{pmatrix}1\\1\end{pmatrix}$}
      }{
        \coordinate (L\i) at (\i:1.25);
        \node[vtx] at (L\i) {};
        \node at (\i:1.7) {\lab};
      }
    
      \draw[edge]
        (L90) -- (L30) -- (L-30) -- (L-90) -- (L-150) -- (L150) -- cycle;
    \end{scope}

    \draw[->, line width=1.0pt] (2.55-3.1,0) -- (3.45-3.1,0) node[midway, above] {$\cong$};

    \begin{scope}[shift={(3.1,0)}]
    \node at (-1.5,2.0) {\Large{$M_1$}};
    
      \foreach \i/\lab in {
        90/{\scriptsize$\begin{pmatrix}1\\0\end{pmatrix}$},
        30/{\scriptsize$\begin{pmatrix}0\\1\end{pmatrix}$},
        -30/{\scriptsize$\begin{pmatrix}1\\0\end{pmatrix}$},
        -90/{\scriptsize$\begin{pmatrix}1\\1\end{pmatrix}$},
        -150/{\scriptsize$\begin{pmatrix}1\\0\end{pmatrix}$},
        150/{\scriptsize$\begin{pmatrix}1\\1\end{pmatrix}$}
      }{
        \coordinate (L\i) at (\i:1.25);
        \node[vtx] at (L\i) {};
        \node at (\i:1.7) {\lab};
      }
    
      \draw[edge]
        (L90) -- (L30) -- (L-30) -- (L-90) -- (L-150) -- (L150) -- cycle;
    
      \draw[glue] (L30)--(L150);
      \draw[glue] (L30)--(L-90);
      \draw[glue] (L30)--(L-150);
    \end{scope}
    \node at (11.4/2,0) {\Large{or}};
    \begin{scope}[shift={(9.3-1,0)}]
    \node at (-1.5,2.0) {\Large{$M_2$}};
    
      \foreach \i/\lab in {
        90/{\scriptsize$\begin{pmatrix}1\\0\end{pmatrix}$},
        30/{\scriptsize$\begin{pmatrix}0\\1\end{pmatrix}$},
        -30/{\scriptsize$\begin{pmatrix}1\\0\end{pmatrix}$},
        -90/{\scriptsize$\begin{pmatrix}1\\1\end{pmatrix}$},
        -150/{\scriptsize$\begin{pmatrix}1\\0\end{pmatrix}$},
        150/{\scriptsize$\begin{pmatrix}1\\1\end{pmatrix}$}
      }{
        \coordinate (L\i) at (\i:1.25);
        \node[vtx] at (L\i) {};
        \node at (\i:1.7) {\lab};
      }
    
      \draw[edge]
        (L90) -- (L30) -- (L-30) -- (L-90) -- (L-150) -- (L150) -- cycle;
    
      \draw[glue] (L30)--(L150);
      \draw[glue] (L-30)--(L150);
    
    \end{scope}
    
    \end{tikzpicture} 
    \caption{Connected sum decompositions over a hexagon} \label{fig:bier-delta0-m3-connected-sum}
\end{figure}
For example, let~$M=M(\Bier\Delta^2_0,\Lambda)$ be the small cover over the hexagonal Bier sphere~$\Bier\Delta^2_0$ shown in Figure~\ref{fig:bier-delta0-m3-connected-sum}.
The dotted lines mark the missing faces used to decompose~$M$ successively as connected sums.
The two decompositions shown in the figure yield the connected sum descriptions~$M_1$ and~$M_2$.
Then
$$
    M_1\cong \#^{4}\RP^{2} \quad \text{ and } \quad M_2\cong \RP^{2}\#\RP^{2}\#(\RP^{1}\times \RP^{1}).
$$
Note that the two descriptions yield the same surface up to homeomorphism, since
$$
    \#^{3}\RP^{2} \cong  \RP^{2}\#(\RP^{1}\times \RP^{1}).
$$

Assume that~$m\geq 4$. 
Let~$\Gamma=\partial\Delta^{m-1}\#_{\sigma}\partial\Delta^{m-1}$ be the connected sum of two boundary complexes of the~$(m-1)$-simplex along a common facet~$\sigma=\{p_1,\ldots,p_{m-1}\}$.
Then the vertex set of~$\Gamma$ is $\{p_1,\ldots,p_{m-1},u,v\}$, where~$u$ and~$v$ are the two vertices not contained in~$\sigma$, one from each copy of~$\partial\Delta^{m-1}$.
Let~$\Lambda$ be a mod~$2$ characteristic matrix for~$\Gamma$ which has the form
\begin{equation}\label{eq:beforcoro5-1}
    \Lambda=\begin{pNiceArray}{c:cc}[first-row,margin,code-for-first-row=\scriptstyle]
        \begin{array}{*{4}{>{\scriptstyle}c}} p_1 & \cdots & p_{m-2} & v \end{array}  & p_{m-1} & u\\
        &a_1 &\ b_1\\
        I_{m-1} & \vdots & \vdots\\
        & a_{m-1} & b_{m-1}
    \end{pNiceArray}.
\end{equation}
In~\cite{Choi-Masuda-Suh2010Quasitoric}, the small cover~$M(\Gamma,\Lambda)$ is called a~$2$-stage generalized real Bott manifold, and its homeomorphism type is classified as follows.
\begin{equation}\label{eq:beforcoro5-2}
    M(\Gamma,\Lambda) \cong
        \begin{cases}
            \RP^{m-1}\#\RP^{m-1},& \text{if }a_{m-1}=1,\\
            \RP\bigl(\gamma\oplus\R^{m-2}\bigr),& \text{if }a_{m-1}=0 \text{ and $m \cdot (b_1 +\cdots + b_{m-1}) \equiv 1 \pmod 2$},\\
            \RP^{m-2}\times\RP^1,& \text{otherwise},
        \end{cases}
\end{equation}
where~$\gamma$ denotes the tautological real line bundle over~$\RP^1$.

To state the classification of small covers over~$\Bier\Delta_{0}^{m-1}$, we recall the parametrization of characteristic matrices given in Theorem~\ref{thm:r=0_sc}.

\begin{corollary}\label{thm:connected_sum}
    For~$m\geq 4$ and~$r\in\{0,m-3\}$, every small cover over~$\Bier\Delta_{r}^{m-1}$ is homeomorphic to one of the following manifolds
    $$
        \#^{m+1}\RP^{m-1}, \quad (\#^{m-1}\RP^{m-1})\#(\RP^{m-2}\times \RP^1), \quad \text{or} \quad (\#^{m-1}\RP^{m-1})\#\RP(\gamma\oplus \R^{m-2}),
    $$
    where~$\gamma$ denotes the tautological line bundle over~$\RP^1$.
    In particular, the last case occurs only when~$m$ is odd.
\end{corollary}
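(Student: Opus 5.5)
Since the Alexander dual of $\Delta_{m-3}^{m-1}$ is $\Delta_0^{m-1}$ and $\Bier K\cong\Bier\widehat{K}$, it suffices to treat $r=0$. The plan is to decompose $\Bier\Delta_0^{m-1}$ by iterated connected sums of simplicial spheres along facets, apply Proposition~\ref{eqn:connected_sum} at each step, and use the classification~\eqref{eq:beforcoro5-2} for the one remaining non-simplex piece.

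First we describe $\Bier\Delta_0^{m-1}$ combinatorially. Its facets are $\{i\}\cup\overline{\tau}$ with $|\tau|=m-2$ and $i\notin\tau$, so $\tau=[m]\setminus\{i,j\}$ for some $j\neq i$. The sphere has $m(m-1)$ facets and $2m$ vertices. We look for missing faces of size $m-1$ whose boundaries are contained in the sphere; these are the separating $(m-1)$-sets. The natural candidates are $\{i\}\cup\overline{[m]\setminus\{i,j,k\}}\cup\{\overline{j}\}$-type sets, i.e.\ sets of the form $\overline{[m]\setminus\{i\}}$ and their variants. Cutting along such a set splits off a copy of $\partial\Delta^{m-1}$, which contributes an $\RP^{m-1}$ summand. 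Cutting along $m-1$ such faces should leave the complex $\partial\Delta^{m-1}\#\partial\Delta^{m-1}$ on $m+1$ vertices. Counting vertices confirms this: $2m-(m-1)=m+1$.

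The key condition in Proposition~\ref{eqn:connected_sum} is that the columns indexed by each cutting face form an invertible matrix. We check this for every matrix listed in Theorem~\ref{thm:r=0_sc}: $A_0$, $A_v$, $B^1_v$, and $C^t_v$. The cuts may be chosen depending on the family, exploiting the freedom in choosing which missing faces to cut, just as in the hexagon example. Each cut of a simplex piece gives $\RP^{m-1}$, because a characteristic matrix over $\partial\Delta^{m-1}$ always yields $\RP^{m-1}$. After $m-1$ cuts we obtain
\[
M\cong(\#^{m-1}\RP^{m-1})\#M(\Gamma,\Lambda'),
\]
where $\Gamma=\partial\Delta^{m-1}\#_\sigma\partial\Delta^{m-1}$. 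We then normalize $\Lambda'$ to the form~\eqref{eq:beforcoro5-1} and read off $a_{m-1}$ and $b_1+\cdots+b_{m-1}$. By~\eqref{eq:beforcoro5-2}, $M(\Gamma,\Lambda')$ is $\RP^{m-1}\#\RP^{m-1}$, $\RP(\gamma\oplus\R^{m-2})$, or $\RP^{m-2}\times\RP^1$. These three outcomes give the three manifolds in the statement. The condition $m(b_1+\cdots+b_{m-1})\equiv1$ forces $m$ to be odd, which yields the final assertion.

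The main obstacle is the combinatorial step: exhibiting an explicit, uniform sequence of separating missing faces in $\Bier\Delta_0^{m-1}$ and verifying the determinant-one condition for all $1+m2^{m-2}$ matrices. I would first try to cut along the faces $\{1\}\cup\overline{[m]\setminus\{1,j\}}$ minus a suitable vertex, for $j$ ranging over $m-1$ values. For families where some cut has determinant $0$, I would use the alternative cut involving $m$ or $\overline{m}$, exploiting the symmetry of the sphere under permuting $[m]$. The remaining computation of $a_{m-1}$ and $\sum b_i$ in each family is then routine.
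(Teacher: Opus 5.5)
Your overall architecture is the paper's: reduce to $r=0$, peel off simplex boundaries with Proposition~\ref{eqn:connected_sum}, finish with \eqref{eq:beforcoro5-2}, and read off the parity restriction from $m(b_1+\cdots+b_{m-1})\equiv 1$. But the step that carries the proof is left open, and your plan for it rests on a misreading of the complex. The only $(m-1)$-element missing faces of $\Bier\Delta_0^{m-1}$ are the $m$ sets $\sigma_i=\overline{[m]\setminus\{i\}}$; the other minimal non-faces are $\{i,\overline{i}\}$ and $\{i,j\}$. The sets $\{i\}\cup\overline{[m]\setminus\{i,j,k\}}\cup\{\overline{j}\}$ you name are facets, and ``$\{1\}\cup\overline{[m]\setminus\{1,j\}}$ minus a vertex'' are ridges, so you cannot cut along either. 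In fact $\Bier\Delta_0^{m-1}$ is the boundary of the simplex on $[\overline{m}]$ with every facet $\sigma_i$ stellarly subdivided by the vertex $i$. So there are no ``alternative cuts involving $m$ or $\overline{m}$''; the only freedom is which $\sigma_i$ to leave uncut. Everything therefore hinges on showing that, for every characteristic matrix $\Lambda$, $\det\Lambda|_{\sigma_i}=1$ for at least $m-1$ of the indices $i$. If two of these minors vanished, the leftover piece would not be $\partial\Delta^{m-1}\#\partial\Delta^{m-1}$ and \eqref{eq:beforcoro5-2} would not apply. You flag this as the main obstacle but never establish it.

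The paper settles it by direct computation on the list of Theorem~\ref{thm:r=0_sc}. All $m$ minors equal $1$ for $A_0$, and exactly one fails, at $\nu=m$, $1$, $t$ for $A_v$, $B^1_v$, $C^t_v$ respectively. It then leaves $\Gamma_\nu$ attached to $\Gamma_0$ and computes $a_{m-1}=0$ and $b_1+\cdots+b_{m-1}$.

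You can also close the gap without the classification. Every $(m-2)$-subset of $[\overline{m}]$ is a face, so any $m-2$ of the columns of $\Lambda$ indexed by $\overline{1},\ldots,\overline{m}$ are independent in $\Z_2^{m-1}$. These $m$ columns cannot lie in a hyperplane. Indeed, in $\Z_2^{d}$ with $d\geq 2$, suppose every $d$ members of a list of vectors are independent, and write coordinates in a basis formed by $d$ of them. Then each further vector must be $(1,\ldots,1)$, and two equal vectors cannot lie in an independent $d$-set, so the list has at most $d+1$ members. Here $d=m-2\geq 2$ and the list has $m$ members. Hence the linear relations among these columns form a one-dimensional space, spanned by a relation whose support has at least $m-1$ elements. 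Then $\sigma_i$ is dependent only if this support misses $\overline{i}$, which happens for at most one $i$.

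With this lemma your route is complete, and it even proves the corollary independently of Theorem~\ref{thm:r=0_sc}. The paper's explicit computation additionally identifies the failing index $\nu$ and shows that both parities of $b_1+\cdots+b_{m-1}$ actually occur.
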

\begin{proof}
    Since~$\Bier\Delta_{0}^{m-1}$ and~$\Bier\Delta_{m-3}^{m-1}$ are combinatorially isomorphic, it is enough to consider the case~$r=0$.
    We use the following notation.
    \begin{itemize}
        \item $\Gamma_0$ denotes the boundary of the $(m-1)$-simplex with vertex set $[\overline{m}]$;
        \item for $1\leq i\leq m$, let
            $$
                \sigma_i:=[\overline{m}]\setminus\{\overline{i}\},
            $$
            and let $\Gamma_i$ denote the boundary of the $(m-1)$-simplex with vertex set $V_i :=\sigma_i\cup\{i\}$.
    \end{itemize}
    Then~$\Bier\Delta_0^{m-1}$ admits a connected sum decomposition obtained from~$\Gamma_0$ by taking connected sums with~$\Gamma_i$ along the common facet~$\sigma_i$ for~$1\leq i\leq m$. 
    The resulting complex does not depend on the order in which these connected sums are taken.
    
    Recall the notation~$\Lambda(\cA)$ from~\eqref{eq:mat_form}, and put~$\Lambda=\Lambda(\cA)$.
    For a vertex subset~$W\subset [m]\cup[\overline{m}]$, write~$\Lambda|_W$ for the submatrix of~$\Lambda$ obtained by restricting to the columns indexed by~$W$.
    
    Assume first that~$\cA=A_0$.
    Then~$\det\Lambda|_{\sigma_i}=1$ for all~$1\leq i\leq m$.
    The above connected sum decomposition of~$\Bier\Delta_0^{m-1}$, together with repeated applications of Proposition~\ref{eqn:connected_sum}, gives a homeomorphism
    \begin{align*}
        M(\Bier\Delta_{0}^{m-1},\Lambda) &\cong  M(\Gamma_0,\Lambda|_{[\overline{m}]}) \# M(\Gamma_1,\Lambda|_{V_1}) \# \cdots \# M(\Gamma_m,\Lambda|_{V_m}) \\
        & \cong \#^{m+1}\RP^{m-1}.
    \end{align*}
    The second homeomorphism follows from the fact that the unique small cover over each~$\Gamma_i$ is~$\RP^{m-1}$, since each~$\Gamma_i$ is the boundary of an~$(m-1)$-simplex.
    
    Now assume that~$\cA$ is one of the matrices in Definition~\ref{def:mat}.
    By Theorem~\ref{thm:r=0_sc}, $\Lambda$ is a characteristic matrix over~$\Bier\Delta_{0}^{m-1}$.
    Define~$\nu \in[m]$ by
    $$
        \nu = \begin{cases}
            m, & \text{if } \cA = A_v ,\\
            1, & \text{if } \cA = B_v^1, \\
            t, & \text{if } \cA = C_v^t,
        \end{cases}
    $$
    where~$2 \leq t \leq m-1$ and~$v=(v_2,\ldots, v_{m-1})\in \Z_2^{m-2}$.
    Then, $\det \Lambda|_{\sigma_i} =1$ for all~$1 \leq i \leq m$ except for~$i = \nu$.
    Applying Proposition~\ref{eqn:connected_sum} repeatedly to the connected sum decomposition of~$\Bier\Delta_0^{m-1}$, while leaving the summand~$\Gamma_\nu$ attached to~$\Gamma_0$, gives a homeomorphism
    $$
        M(\Bier\Delta_{0}^{m-1},\Lambda) \cong \big( \#^{m-1}\RP^{m-1}\big) \# \big(M(\Gamma_0 \#_{\sigma_\nu}\Gamma_{\nu},\Lambda|_{\{\nu\}\cup[\overline{m}]})\big). 
    $$
    Since~$\Gamma_0 \#_{\sigma_\nu}\Gamma_{\nu}$ is simplicially isomorphic to~$\partial\Delta^{m-1}\#_{\sigma}\partial\Delta^{m-1}$ for a common facet~$\sigma$, the homeomorphism type of the latter connected summand is determined by~\eqref{eq:beforcoro5-2}.
    In the notation of~\eqref{eq:beforcoro5-1}, a direct calculation gives~$a_{m-1} =0$ and   
    $$
        b_1+\cdots +b_{m-1} =
            \begin{cases}
                \sum\limits_{i=2}^{m-1}(1+v_i) & \text{for } \cA = A_v, \\
                \sum\limits_{i=2}^{m-1}v_i & \text{for } \cA = B^1_v, \\
                1+\sum\limits_{i=2}^{m-1}v_i & \text{for } \cA = C_v^t.
            \end{cases}
    $$
    Since the~$\Z_2$-vector~$v$ can be chosen freely, the sum~$b_1+\cdots+b_{m-1}$ can attain both values~$0$ and~$1$ in~$\Z_2$.
    This completes the proof.
\end{proof}

We now consider the case~$r>0$.
Our goal is to compute the rational Betti numbers of small covers over~$\Bier\Delta^{m-1}_{r}$.
Let~$\Gamma$ be a simplicial sphere on a finite vertex set~$S$, and let~$\Lambda$ be a characteristic matrix over~$\Gamma$. 
We denote by~$\row\Lambda$ the row space of~$\Lambda$.
For each~$\omega\in\row\Lambda$, we use the following notation.
\begin{itemize}
    \item $V_\omega\subseteq S$ denotes the set of vertices indexing the nonzero entries of~$\omega$, and
    \item $\Gamma_\omega$ denotes the full subcomplex of~$\Gamma$ induced by~$V_\omega$. 
\end{itemize}

For convenience, we write~$\beta_i(X)$ for the~$i$th rational Betti number of a topological space~$X$.
\begin{lemma}[\cite{Choi-Park2017Cohomology}]\label{lem:choi-park2017cohomology}
    Let~$M$ be the small cover associated with~$(\Gamma,\Lambda)$.
    Then the~$k$th rational Betti number of~$M$ is given by
    $$
        \beta_k(M)=\sum_{\omega\in \row \Lambda}\widetilde{\beta}_{k-1}(\Gamma_\omega),
    $$
    where~$\widetilde{\beta}_{k-1}$ denotes the~$(k-1)$th reduced Betti number.
\end{lemma}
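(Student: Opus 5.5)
The plan is to compute $H^*(M;\mathbb{Q})$ as the invariant part of $H^*(\RZ_\Gamma;\mathbb{Q})$ under the finite group $\ker\lambda$. I will then decompose the latter cohomology into isotypic pieces indexed by characters of $\Z_2^m$. Since $\ker\lambda$ is a finite group acting freely on $\RZ_\Gamma$ (by~\cite{Choi-Kaji-Theriault2017}, because $\lambda$ is a characteristic map), the transfer gives
$$H^*(M;\mathbb{Q})\cong H^*(\RZ_\Gamma;\mathbb{Q})^{\ker\lambda}.$$
So it suffices to understand $H^*(\RZ_\Gamma;\mathbb{Q})$ as a $\Z_2^m$-representation.

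For the representation, I use the cubical cell structure of $\RZ_\Gamma\subseteq (D^1)^m$, which is invariant under the sign action. Its cellular cochain complex is a subcomplex of $C^*(D^1)^{\otimes m}$. The rational cochains of $D^1$ have a basis adapted to the involution $x\mapsto -x$:
\begin{itemize}
\item $u=[1]+[-1]$, which is fixed by the involution;
\item $w=[1]-[-1]$, on which the involution acts by $-1$;
\item the $1$-cell $e$, on which the involution acts by $-1$ (it reverses orientation).
\end{itemize}
The coboundary sends $w$ to $\pm 2e$ and kills $u$. Hence $C^*(D^1;\mathbb{Q})$ splits as the trivial-character part $\mathbb{Q}u$ plus the sign-character part $\mathbb{Q}w\oplus\mathbb{Q}e$.

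Tensoring these splittings, $C^*(\RZ_\Gamma;\mathbb{Q})$ decomposes as a direct sum over $I\subseteq S$. The summand for $I$ is spanned by products with $w$ or $e$ in the coordinates of $I$ and $u$ elsewhere. The group $\Z_2^m$ acts on it by the character $\chi_I(g)=(-1)^{\sum_{i\in I}g_i}$. The main step is to identify this summand, up to a degree shift by one, with the augmented simplicial cochain complex of the full subcomplex $\Gamma_I$. A generator with $e$ in the coordinates of $\sigma\subseteq I$ and $w$ on $I\setminus\sigma$ is allowed exactly when $\sigma\in\Gamma$, and it corresponds to the simplex $\sigma$ of $\Gamma_I$. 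After rescaling, the coboundary matches the simplicial coboundary, with the empty simplex in degree $0$. This gives the equivariant decomposition
$$H^k(\RZ_\Gamma;\mathbb{Q})\cong\bigoplus_{I\subseteq S}\widetilde H^{k-1}(\Gamma_I;\mathbb{Q})\otimes\chi_I.$$
I expect the bookkeeping of signs and degrees in this identification to be the main obstacle. I would handle it by checking the one-coordinate case, then using the Koszul sign rule for tensor products, in the spirit of Bahri--Bendersky--Cohen--Gitler.

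Finally, I take invariants. The summand $\chi_I$ survives exactly when $\chi_I$ is trivial on $\ker\lambda$. This holds precisely when the indicator vector $\ind_I\in\Z_2^m$ lies in $(\ker\lambda)^\perp$. That annihilator equals the row space of $\Lambda$, by linear algebra over $\Z_2$. Writing $\omega=\ind_I\in\row\Lambda$, we have $V_\omega=I$ and $\Gamma_\omega=\Gamma_I$. Taking dimensions then yields
$$\beta_k(M)=\sum_{\omega\in\row\Lambda}\widetilde\beta_{k-1}(\Gamma_\omega).$$
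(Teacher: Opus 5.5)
The paper gives no proof of this lemma; it is quoted from Choi--Park. Your argument is correct, and it is the standard route to the cited formula (Suciu--Trevisan for real toric manifolds, extended by Choi--Park): pass to $\ker\lambda$-invariants by transfer, use the isotypic splitting $H^k(\RZ_\Gamma;\mathbb{Q})\cong\bigoplus_{I}\widetilde H^{k-1}(\Gamma_I;\mathbb{Q})\otimes\chi_I$, and use $(\ker\lambda)^{\perp}=\row\Lambda$.

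One slip needs fixing. The cellular cochain complex of the subcomplex $\RZ_\Gamma$ is a \emph{quotient} of $C^*(D^1)^{\otimes m}$ (restriction of cochains), not a subcomplex. The kernel of this quotient is spanned by the generators whose $e$-set is not in $\Gamma$. It is exactly the induced quotient coboundary that discards the terms with $\sigma\cup\{j\}\notin\Gamma$, and so makes the $I$-summand match the augmented coboundary of $\Gamma_I$. Alternatively, run the same argument on cellular chains, where $C_*(\RZ_\Gamma)$ genuinely is a subcomplex of $C_*(D^1)^{\otimes m}$.
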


Thus, for a characteristic matrix~$\Lambda$ over~$\Bier\Delta_r^{m-1}$, the rational Betti numbers of~$M(\Bier\Delta^{m-1}_{r},\Lambda)$ are obtained from the reduced Betti numbers of the full subcomplexes~$(\Bier\Delta^{m-1}_{r})_{\omega}$ for all~$\omega\in\row\Lambda$.

For a simplicial complex~$K$ on~$[m]$, the vertex set of~$\Bier K$ is~$[m]\cup[\overline{m}]$. 
Hence every vertex subset of~$\Bier K$ is uniquely of the form~$I\cup\overline{J}$ for some~$I,J\subset[m]$.
We write~$\Lk(\sigma,\Gamma)$ for the link of~$\sigma$ in~$\Gamma$ and~$\Sigma^k\Gamma$ for the~$k$-fold suspension of~$\Gamma$, with the convention~$\Sigma^0\Gamma=\Gamma$.
Lemma~\ref{thm:full_subcomplex_Bier} gives the homotopy types needed for this computation.

\begin{lemma}[\cite{Choi-Yoon-Yu2026}]\label{thm:full_subcomplex_Bier}
    Let $K$ be a simplicial complex on $[m]$, and let $I,J$ be subsets of $[m]$.
    Then the full subcomplex $(\Bier K)_{I\cup\overline{J}}$ of $\Bier K$ with respect to $I\cup\overline{J}$ is homotopy equivalent to
    \begin{enumerate}
        \item\label{thm:full_subcomplex_1} \makebox[4cm][l]{the $(|I|-1)$-sphere,} if $I=J$, $I\in K$, and $\overline{I}\in\widehat{K}$,
        \item\label{thm:full_subcomplex_2} \makebox[4cm][l]{the $(|I|-2)$-sphere,} if $I=J$, $I\notin K$, and $\overline{I}\notin\widehat{K}$,
        \item\label{thm:full_subcomplex_3} \makebox[4cm][l]{$\Sigma^{|J|}\Lk(J,K\vert_I)$, } if $J\subsetneq I$ and $J\in K$,
        \item\label{thm:full_subcomplex_4} \makebox[4cm][l]{$\Sigma^{|I|}\Lk(\overline{I},\widehat{K}\vert_{\overline{J}})$,} if $J\supsetneq I$ and $\overline{I}\in\widehat{K}$, and
        \item\label{thm:full_subcomplex_5} \makebox[4cm][l]{a point,} otherwise.
    \end{enumerate}
\end{lemma}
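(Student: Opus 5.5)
We plan to use the description of $(\Bier K)_{I\cup\overline{J}}$ as the deleted join $K|_I \ast_\Delta \widehat{K}|_{\overline{J}}$. Its faces are the sets $\sigma\cup\overline{\tau}$ with $\sigma\subseteq I$, $\tau\subseteq J$, $\sigma\in K$, $[m]\setminus\tau\notin K$ and $\sigma\cap\tau=\emptyset$. The argument rests on two tools: an exchange lemma for adding vertices, and Alexander duality inside the sphere $\Bier K$.

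\emph{Exchange lemma.} Let $x\in I\setminus J$ and $y\in J\setminus I$, and let $\sigma\cup\overline{\tau}$ be a face of the full subcomplex containing neither $x$ nor $\overline{y}$. We claim that $\sigma\cup\{x\}$ or $\sigma\cup\overline{\tau\cup\{y\}}$ is again a face. Suppose $\sigma\cup\{x\}\notin K$. Since $\sigma\cup\{x\}\subseteq[m]\setminus(\tau\cup\{y\})$ and $K$ is closed under subsets, we get $[m]\setminus(\tau\cup\{y\})\notin K$, so $\overline{y}$ can be added. Disjointness holds because $y\notin I$ and $x\notin J$.

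\emph{Contractible cases.} If both $I\setminus J$ and $J\setminus I$ are nonempty, the exchange lemma shows that the complex is the union of the closed stars of $x$ and $\overline{y}$. We then show it is a cone, either directly or by a discrete Morse matching that pairs $F$ with $F\triangle\{x\}$ when possible and otherwise with $F\triangle\{\overline{y}\}$. The subtle point is acyclicity of this matching, which we expect to check by ordering faces by $|\sigma|$. Similar collapsing arguments handle the remaining situations in case~(5): $J\subsetneq I$ with $J\notin K$, and dually $J\supsetneq I$ with $\overline{I}\notin\widehat{K}$.

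\emph{Case (3).} For $J\subsetneq I$ with $J\in K$, we separate the barred part. Every face has the form $\sigma\cup\overline{\tau}$ with $\tau\subseteq J$. We will use a homotopy pushout, or equivalently a filtration by $\tau$, to show that the only non-contractible contribution comes from $\tau=J$, where the admissible $\sigma$ range over faces of $K|_{I\setminus J}$ disjoint from $J$. Comparing this with the deleted-join description should identify the whole complex, up to homotopy, with the join of $\Lk(J,K|_I)$ with $\partial\Delta^{J}$ taken in the dual direction. This join is $\Sigma^{|J|}\Lk(J,K|_I)$, since $\partial\Delta^{|J|-1}\simeq S^{|J|-2}$ and joining with it gives the $(|J|-1)$-fold suspension, plus one more suspension from the vertex $x\in I\setminus J$.

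\emph{Cases (4), (1) and (2).} Case~(4) follows from case~(3) by the symmetry $K\leftrightarrow\widehat{K}$, $[m]\leftrightarrow[\overline{m}]$, using $\widehat{\widehat{K}}=K$. For $I=J$, the complex consists of pairs $(\sigma,\tau)$ in $I$ with $\sigma\in K$ and $\overline{\tau}\in\widehat{K}$. Here we use Alexander duality in the $(m-2)$-sphere $\Bier K$: the full subcomplexes on $W=I\cup\overline{I}$ and on its complement $I^c\cup\overline{I^c}$ have reduced homology related by a shift. Combined with an explicit embedding of the boundary of the cross-polytope on $I$, which is a sphere of dimension $|I|-1$, or of a facet-deleted version of dimension $|I|-2$, this yields the sphere homotopy types. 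Simple connectivity, obtained from the collapses, upgrades homology to homotopy. The main obstacle is case~(3): keeping the homotopy equivalence with the suspended link explicit rather than only homological, and verifying acyclicity of the matchings.
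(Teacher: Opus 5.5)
The paper only cites this lemma from \cite{Choi-Yoon-Yu2026}, so your sketch has to stand on its own. Your exchange lemma and the mixed case $I\setminus J\neq\emptyset\neq J\setminus I$ are fine. Matching first by $x$, then by $\overline{y}$ on the $x$-critical faces, gives a complete acyclic matching by the standard lemma on iterated element matchings, even though the complex need not be a cone.

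The genuine gap is case~(3). A filtration by the barred part $\tau$ cannot produce $\Lk(J,K|_I)=\{\rho\subseteq I\setminus J:\rho\cup J\in K\}$, because that link is defined through the \emph{unbarred} vertices of $J$. For $J\neq\emptyset$ the closed stratum $\tau=J$ is the cone $\Delta^{\overline{J}}*K|_{I\setminus J}$, and it is empty whenever $[m]\setminus J\in K$. For instance, take $m=3$, $K=\{\emptyset,1,2,3,12\}$, $I=\{1,3\}$, $J=\{3\}$. Then $\overline{3}$ is not a vertex and no face has $\tau=J$, yet the full subcomplex is two points, i.e.\ $S^0=\Sigma\Lk(\{3\},K|_I)$. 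Likewise, ``join with $\partial\Delta^{J}$, plus one more suspension from a vertex $x$'' has no mechanism behind it: a vertex contributes a cone, not a suspension.

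The missing idea is to peel off an unbarred vertex $j\in J$ and reuse your mixed case. Write $L=(\Bier K)_{I\cup\overline{J}}=\mathrm{dl}_L(j)\cup\mathrm{st}_L(j)$. The deletion $(\Bier K)_{(I\setminus j)\cup\overline{J}}$ has $j\in J\setminus(I\setminus j)$ and $I\setminus J\neq\emptyset$, so it is contractible, and hence $L\simeq\Sigma\,\mathrm{lk}_L(j)$. A direct check gives $\mathrm{lk}_L(j)=(\Bier\Lk_K(j))_{(I\setminus j)\cup\overline{J\setminus j}}$. Here $\Lk_K(j)$ is taken on $[m]\setminus j$, and its Alexander dual is $\{\overline{\tau}: j\notin\tau,\ [m]\setminus\tau\notin K\}$. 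This is again case~(3), and $\Lk(J\setminus j,\Lk_K(j)|_{I\setminus j})=\Lk(J,K|_I)$. Induction on $|J|$, starting from $L=K|_I$ at $J=\emptyset$, gives an honest homotopy equivalence. Case~(4) then follows by your symmetry.

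Your $I=J$ analysis has two further holes. First, your list of contractible situations omits $I=J$ with exactly one of $I\in K$, $\overline{I}\in\widehat{K}$, i.e.\ $I$ and $[m]\setminus I$ both in $K$ or both outside $K$. Alexander duality pairs such an $I$ with its complement, which is of the same kind, so it decides nothing. A direct argument is easy. If $I,[m]\setminus I\in K$, then $L=\{\sigma\cup\overline{\tau}:\tau\in B,\ \sigma\subseteq I\setminus\tau\}$ for a complex $B$ on $I$ with $I\notin B$. The face-poset map $\sigma\cup\overline{\tau}\mapsto\tau$ has fibres (boundary of the cross-polytope on $\tau$)$\,*\,\Delta^{I\setminus\tau}$, which are cones since $\tau\neq I$, so Quillen's fibre lemma gives contractibility. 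The other situation is symmetric via $\sigma\cup\overline{\tau}\mapsto\sigma$.

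Second, case~(1) needs no duality: both conditions make every disjoint pair $\sigma,\tau\subseteq I$ admissible, so $L$ is exactly the boundary of the cross-polytope on $I\cup\overline{I}$. In case~(2), duality against case~(1) on the complement gives only the homology of $S^{|I|-2}$. The simple connectivity you attribute to ``the collapses'' refers to collapses you never construct. Peeling a vertex $j\in I$ settles it: the deletion falls under case~(4) or~(5), and the link is an $I=J$ complex for $\Lk_K(j)$. If $([m]\setminus I)\cup\{j\}\notin K$, the deletion is $\simeq S^{|I|-2}$ and the link is contractible. Otherwise the deletion is contractible and the link is $\simeq S^{|I|-3}$. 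In both cases $L\simeq S^{|I|-2}$.
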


Combining Lemmas~\ref{lem:choi-park2017cohomology} and~\ref{thm:full_subcomplex_Bier}, one can compute the rational Betti numbers of all small covers over~$\Bier\Delta_r^{m-1}$.
Note that~$\Bier\Delta_r^{m-1} \cong\Bier\Delta_{m-r-3}^{m-1}$, so it suffices to consider
$$
0< r\leq \left\lfloor\frac{m-3}{2}\right\rfloor.
$$
Among them, the case corresponding to~$A_0$ was already computed in~\cite{Choi-Yoon-Yu2026}.
To recall this result in the following proposition, we introduce the following notation for indicator functions on~$\Z$.
For a subset~$A\subseteq \Z$, let~$\ind_A\colon \Z \to \{0,1\}$ denote the indicator function of~$A$, defined by
$$
    \ind_A(x)= 
        \begin{cases}
            1, & \text{if } x \in A,\\
            0, & \text{if } x \notin A.
        \end{cases}
$$

\begin{proposition}[\cite{Choi-Yoon-Yu2026}]\label{prop:Bettisk}
Let~$m\geq 5$ and~$0< r \leq \left\lfloor\frac{m-3}{2}\right\rfloor$.
The rational Betti numbers of $M:=M(\Bier\Delta^{m-1}_r,\Lambda(A_0))$ are given by, for each~$k \geq 0$,
$$
    \beta_{2k}(M) = \dbinom{m}{2k} \cdot \ind_{\{0,1,\ldots,r+1\}}(2k),
$$
and, for each~$k \geq 1$,
$$
    \beta_{2k-1}(M) = \dbinom{m}{2k} \cdot \ind_{\{m-r-1,\ldots,m\}}(2k).
$$
\end{proposition}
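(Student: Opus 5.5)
The plan is to combine Lemma~\ref{lem:choi-park2017cohomology} with Lemma~\ref{thm:full_subcomplex_Bier}. The first step is to describe $\row\Lambda(A_0)$ explicitly. Since $\Lambda(A_0)$ is a column permutation of $\Lambda_m$ from~\eqref{eq:matrix}, I work with $\Lambda_m$ directly. Its $i$th row ($1\le i\le m-1$) has nonzero entries exactly in the columns $i$, $\overline{i}$, $m$, $\overline{m}$.

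Hence for $\omega=\sum_{i\in S}\row_i$ with $S\subseteq[m-1]$:
\begin{itemize}
\item if $|S|$ is even, then $V_\omega=S\cup\overline{S}$;
\item if $|S|$ is odd, then $V_\omega=(S\cup\{m\})\cup\overline{(S\cup\{m\})}$.
\end{itemize}
The map $S\mapsto I$ (with $I=S$ or $I=S\cup\{m\}$) is a bijection from subsets of $[m-1]$ onto subsets of $[m]$ of even cardinality. Every $\omega\in\row\Lambda_m$ therefore has $V_\omega=I\cup\overline{I}$ for a unique $I\subseteq[m]$ with $|I|$ even, and Lemma~\ref{lem:choi-park2017cohomology} becomes
$$
\beta_k(M)=\sum_{I\subseteq[m],\ |I|\text{ even}}\widetilde\beta_{k-1}\bigl((\Bier\Delta_r^{m-1})_{I\cup\overline{I}}\bigr).
$$

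The second step applies Lemma~\ref{thm:full_subcomplex_Bier} with $J=I$, so only cases~(1), (2) and~(5) are relevant. For $K=\Delta_r^{m-1}$ the two membership conditions are:
\begin{itemize}
\item $I\in K$ if and only if $|I|\le r+1$;
\item $\overline{I}\in\widehat K$ if and only if $[m]\setminus I\notin K$, that is, $|I|\le m-r-2$.
\end{itemize}
The hypothesis $r\le\lfloor (m-3)/2\rfloor$ gives $r+1\le m-r-2$, which yields three regimes:
\begin{itemize}
\item if $|I|\le r+1$, both conditions hold and the full subcomplex is $S^{|I|-1}$; this contributes $1$ to $\beta_{|I|}$;
\item if $|I|\ge m-r-1$, both conditions fail and it is $S^{|I|-2}$; this contributes $1$ to $\beta_{|I|-1}$;
\item if $r+2\le|I|\le m-r-2$, we have $I\notin K$ but $\overline{I}\in\widehat K$, so case~(5) applies and the subcomplex is contractible, contributing nothing.
\end{itemize}
The case $I=\emptyset$ gives the empty complex $S^{-1}$, whose convention $\widetilde\beta_{-1}=1$ yields $\beta_0=1$. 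This is consistent with the formula, since $\binom{m}{0}=1$.

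The final step is the count. Writing $|I|=2k$, the number of such $I$ is $\binom{m}{2k}$. The first regime contributes $\binom{m}{2k}$ to $\beta_{2k}$ exactly when $2k\in\{0,\dots,r+1\}$. The second contributes $\binom{m}{2k}$ to $\beta_{2k-1}$ exactly when $2k\in\{m-r-1,\dots,m\}$.

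The two regimes never feed the same Betti number. An even index $2k$ would receive a contribution from the second regime only if $2k+1\ge m-r-1$, i.e.\ $2k\ge m-r-2\ge r+1$. The first regime touches only even degrees at most $r+1$, and the second only odd degrees. So the totals are exactly the stated formulas.

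I expect the main obstacle to be bookkeeping rather than anything conceptual. One point is checking the row-space description carefully, in particular that the parity of $|S|$ decides whether $\{m,\overline m\}$ is added, which is what makes all supports symmetric of the form $I\cup\overline I$. The other is checking the boundary inequality $r+1\le m-r-2$, which guarantees that the three regimes partition all possible $|I|$ with no overlap.
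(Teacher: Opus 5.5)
Your proof is correct and follows the route the paper takes: the paper only cites \cite{Choi-Yoon-Yu2026} for the $\Lambda(A_0)$ case, but its proof of Corollary~\ref{thm:betti_r_positive} proceeds the same way, writing each $V_\omega$ as $I\cup\overline{J}$, reading off homotopy types from Lemma~\ref{thm:full_subcomplex_Bier}, and summing via Lemma~\ref{lem:choi-park2017cohomology}. The one blemish is the sentence about an even index $2k$ receiving a second-regime contribution, which is muddled because it implicitly takes $|I|=2k+1$, an odd size; no such check is needed, since second-regime contributions land in $\beta_{|I|-1}$ with $|I|$ even, so parity alone separates the two regimes, as your next sentence correctly says.
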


We now compute the rational Betti numbers for the remaining characteristic matrices described in Theorem~\ref{thm:r=0_sc}.
\begin{corollary}\label{thm:betti_r_positive}
    Let~$m\geq5$ and~$0<r\leq\left\lfloor\frac{m-3}{2}\right\rfloor$.
    Let~$\cA$ be one of the mod~$2$ matrices described in Theorem~\ref{thm:r=0_sc}, except for~$A_0$.
    Set~$M:=M(\Bier\Delta_r^{m-1},\Lambda(\cA))$.
    Then, for~$k\geq0$,
    $$
        \beta_{2k}(M) = \dbinom{m-1}{2k} \cdot \ind_{\{0,1,\ldots,r+1\}}(2k),
    $$
    and, for~$k\geq1$,
    $$
        \beta_{2k-1}(M) = \dbinom{m-1}{2k} \cdot \ind_{\{m-r-1,\ldots,m-1\}}(2k) + \dbinom{m-2}{r+1} \cdot \ind_{\{m-r-1\}}(2k) + \dbinom{m-2}{r} \cdot \ind_{\{r+2\}}(2k).
    $$
\end{corollary}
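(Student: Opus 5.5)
The plan is to apply Lemma~\ref{lem:choi-park2017cohomology} to each matrix $\Lambda(\cA)$ from Theorem~\ref{thm:r=0_sc} other than $A_0$. For each such matrix we must determine, for every $\omega\in\row\Lambda(\cA)$, the support $V_\omega=I\cup\overline{J}$. Then Lemma~\ref{thm:full_subcomplex_Bier} gives the homotopy type of $(\Bier\Delta_r^{m-1})_{I\cup\overline J}$. For $K=\Delta_r^{m-1}$ and $\widehat K=\Delta_{m-r-3}^{m-1}$ the links are explicit.
\begin{itemize}
\item $\Lk(J,K|_I)$ is the $(r-|J|)$-skeleton of the simplex on $I\setminus J$. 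Its only nonzero reduced homology is in degree $r-|J|$, of rank $\binom{|I|-|J|-1}{r-|J|+1}$.
\item The dual links $\Lk(\overline I,\widehat K|_{\overline J})$ behave the same way with $r$ replaced by $m-r-3$.
\end{itemize}
So every term in the sum is an explicit binomial placed in a single degree, and the problem reduces to enumerating the row space.

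We organize the row space relative to the reference case $A_0$, i.e.\ $\Lambda_m$. For $\Lambda_m$ the row space consists of the vectors $\omega_S$ with $S\subseteq[m-1]$. Its support is $S\cup\overline S$ when $|S|$ is even, and $S\cup\{m\}\cup\overline{S\cup\{m\}}$ when $|S|$ is odd. Thus always $I=J$, with an even-size set $I\subseteq[m]$. Cases (1)--(2) of Lemma~\ref{thm:full_subcomplex_Bier} then reproduce Proposition~\ref{prop:Bettisk}: the term $|I|\le r+1$ contributes a sphere in degree $|I|-1$, giving $\beta_{|I|}$, and the term $|I|\ge m-r-1$ contributes in degree $|I|-2$, giving $\beta_{|I|-1}$.

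For the remaining matrices, the explicit forms show that each differs from a D--J representative of $\Lambda_m$ in a controlled way, with one special index $\nu$ as in the proof of Corollary~\ref{thm:connected_sum}. I would row-reduce each of $A_{(1,\ldots,1)}$, $B^s_{0}$ and $C^t_{e_t}$ to a normal form. The expected normal form describes the supports $I\cup\overline J$ as one of two types:
\begin{itemize}
\item $I=J$, with $I$ ranging over even subsets of $[m]\setminus\{\nu\}$ (the role of $m$ is now played by the pair $\nu,\overline\nu$);
\item $I$ and $J$ differing by a single element, with the index $\nu$ or $\overline\nu$ appearing on only one side.
\end{itemize}
A convenient way to verify this is that all four families are related by the symmetry of $\Bier\Delta_r^{m-1}$ permuting $[m]$. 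Moreover, after relabeling, they coincide with a single normal form, namely $\Lambda_m$ with the column $\overline\nu$ (or $\nu$) modified. This would also explain why all non-$A_0$ classes share the same Betti numbers. Once this is established, a single computation suffices.

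With the normal form fixed, the count proceeds in two parts.
\begin{itemize}
\item The supports with $I=J\subseteq[m]\setminus\{\nu\}$ give the terms $\binom{m-1}{2k}$, in degrees exactly as in Proposition~\ref{prop:Bettisk}, with $m$ replaced by $m-1$.
\item The supports with $J=I\setminus\{\nu\}\subsetneq I$ fall under case~(3) of Lemma~\ref{thm:full_subcomplex_Bier}. They are noncontractible only when $|J|\le r$ and $|I|-|J|-1\ge r-|J|+1$. Here $|I\setminus J|=1$, so the link is empty only when $|J|=r+1$, which is excluded. The contributions therefore survive precisely when $J\in K$ and the link is $S^{-1}$, i.e.\ $|J|=r$. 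This gives a sphere of dimension $|J|-1+\ldots$, and collecting over $\binom{m-2}{r}$ choices of $J$ yields the term $\binom{m-2}{r}$ in $\beta_{r+2-1}$.
\item Dually, the supports with $I\subsetneq J$ fall under case~(4) and give $\binom{m-2}{r+1}$ in $\beta_{m-r-2}$.
\end{itemize}
I must check the parity constraints carefully, since the row space only allows supports of appropriate even size. This is what restricts the two extra terms to the stated odd degrees. The main obstacle is exactly this step: pinning down the row space of each matrix precisely, including which pairs $(I,J)$ occur and with which parities. I would first attempt it for $C^t_{e_t}$, where the structure is closest to $A_0$, and then transfer the result to the other families by the vertex-permutation symmetry.
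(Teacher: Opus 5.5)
There is a genuine gap, and it sits at the step you yourself flag as the main obstacle. Your treatment of the supports $I\cup\overline I$, with $I$ an even subset of $[m]\setminus\{\nu\}$, is correct and gives the $\binom{m-1}{2k}$ terms. Your remark that the non-$A_0$ classes are permuted by relabelings of $[m]$ that move $\nu$ is also correct. But your predicted normal form for the other $2^{m-2}$ elements of $\row\Lambda(\cA)$ is wrong.

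These elements do not satisfy $|I\triangle J|=1$. Nor is $\row\Lambda(\cA)$ the row space of $\Lambda_m$ with one column changed, since such a change alters only one coordinate of each row-space vector. Take $\cA=B^s_{(0,\ldots,0)}$ as an example. All rows of $\Lambda(\cA)$ except row $s$ agree with those of $\Lambda_m$, while row $s$ has support $\{s,\overline1,\ldots,\overline{r+1},r+2,\ldots,m-1,m\}$. This row contains both elements of the pair $\{s,\overline s\}$ and exactly one element of every other pair $\{j,\overline j\}$. Adding it to a sum of the other rows therefore gives $V_\omega=I\cup\overline J$ with $I\cup J=[m]$ and $I\cap J=\{\nu\}$, so $|I|+|J|=m+1$. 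The same structure holds for $C^t_{\chi^t}$, and for $A_{(1,\ldots,1)}$ when $N$ is odd. By Lemma~\ref{thm:full_subcomplex_Bier}, every such support is contractible except two:
\begin{itemize}
\item $(I,J)=([m],\{\nu\})$, falling under case~(3), which lies in the row space exactly when $r$ is even;
\item $(I,J)=(\{\nu\},[m])$, falling under case~(4), which lies in the row space exactly when $m-r$ is odd.
\end{itemize}

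Your case~(3) count is also inconsistent on its own terms. If $|I\setminus J|=1$, then $\Lk(J,K|_I)$ is the $(r-|J|)$-skeleton of a single vertex. This is a point for $|J|\le r$ and equals $\{\emptyset\}$ only when $|J|=r+1$. So nothing survives at $|J|=r$. Moreover, there would be $\binom{m-1}{r}$ choices of $J\subseteq[m]\setminus\{\nu\}$, not $\binom{m-2}{r}$.

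The missing idea is that each extra binomial is the rank of the homology of one full subcomplex, not a count of supports. Specifically,
$(\Bier\Delta_r^{m-1})_{[m]\cup\{\overline\nu\}}\simeq\Sigma\Lk(\nu,\Delta_r^{m-1})=\Sigma\Delta^{m-2}_{r-1}\simeq\bigvee^{\binom{m-2}{r}}S^r$
and
$(\Bier\Delta_r^{m-1})_{\{\nu\}\cup[\overline m]}\simeq\Sigma\Delta^{m-2}_{m-r-4}\simeq\bigvee^{\binom{m-2}{r+1}}S^{m-r-3}$.
Your own rank formula $\binom{|I|-|J|-1}{r-|J|+1}$ with $|I|=m$ and $|J|=1$ gives exactly $\binom{m-2}{r}$. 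These two subcomplexes contribute to $\beta_{r+1}$ and $\beta_{m-r-2}$. The parity conditions above are precisely what force these degrees to be odd. This produces the terms $\binom{m-2}{r}\ind_{\{r+2\}}(2k)$ and $\binom{m-2}{r+1}\ind_{\{m-r-1\}}(2k)$ in $\beta_{2k-1}$.
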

\begin{proof}
Let~$1\leq s\leq r+1$ and~$r+2\leq t\leq m-1$, and let~$\chi^t = (\chi^t_{r+2},\ldots,\chi^t_{m-1})\in\Z_2^{m-r-2}$ be defined by
$$
    \chi^t_i= \ind_{\{t\}}(i)
$$
for each~$r+2\leq i\leq m-1$.
For~$\cA\in \{ A_{(1,\ldots,1)}, B^s_{(0,\ldots,0)}, C^t_{\chi^t} \}$, we define~$\nu(\cA)\in[m]$ by
$$
    \nu(\cA)=
    \begin{cases}
        m, & \text{if } \cA = A_{(1,\ldots,1)},\\
        s, & \text{if } \cA=B^s_{(0,\ldots,0)},\\
        t, & \text{if } \cA=C^t_{\chi^t}.
    \end{cases}
$$
For each such~$\cA$, let~$v_i$ denote the $i$th row of~$\Lambda(\cA)$ for each~$i\in[m-1]$.
Fix an arbitrary element~$\omega\in\row\Lambda(\cA)$ and write
$$
    \omega=v_{i_1}+\cdots+v_{i_N},
$$
where~$i_1,\ldots,i_N$ are pairwise distinct elements of~$[m-1]$.

\noindent \underline{\textbf{CASE 1. $\cA = A_{(1,\ldots,1)}$:}}
When~$N$ is even, we have~$V_\omega=I\cup\overline{I}$, where
$$ 
    I=\{i_1,\ldots,i_N\}\subseteq[m]\setminus\{\nu(\cA)\} 
$$
and~$|I|=N$ is even.
When~$N$ is odd, $V_\omega=I\cup\overline{J}$ for some~$I,J\subseteq[m]$ such that each~$i_\ell$ belongs to exactly one of~$I$ and~$J$.
Moreover,~$\nu(\cA)=m\in I\cap J$.

\noindent \underline{\textbf{CASE 2. $\cA = B^s_{(0,\ldots,0)}$ or $\cA = C^t_{\chi^t}$:}}
If~$\nu(\cA) \notin \{i_1,\ldots,i_N\}$, then~$V_\omega = I \cup \overline{I}$ for some~$I\subseteq[m]\setminus\{\nu(\cA)\}$ such that~$|I|$ is even.
Otherwise,~$V_\omega=I\cup\overline{J}$ for some~$I,J\subseteq[m]$ such that~$\nu(\cA)\in I\cap J$ and~$m \in (I \cup J) \setminus (I \cap J)$.

Combining the two cases, for each such~$\cA$, Lemma~\ref{thm:full_subcomplex_Bier} shows that~$(\Bier\Delta_r^{m-1})_\omega$ has a nonzero reduced Betti number exactly in the following three cases.
\begin{itemize}
    \item $V_\omega=I\cup\overline{I}$ for some $I\subseteq[m]\setminus\{\nu(\cA)\}$ with~$|I|$ even;
    \item $V_\omega=[m]\cup\{\overline{\nu(\cA)}\}$, which is possible precisely when~$r$ is even;
    \item $V_\omega=\{\nu(\cA)\}\cup[\overline{m}]$, which is possible precisely when~$m-r$ is odd. 
\end{itemize}
We further use Lemma~\ref{thm:full_subcomplex_Bier} to compute the full subcomplexes corresponding to these three cases. 
For~$I\subseteq[m]\setminus\{\nu(\cA)\}$ with~$|I|=2k$, 
$$
    (\Bier\Delta_r^{m-1})_{I\cup\overline{I}} \simeq 
    \begin{cases} 
        S^{2k-1}, & \text{if } 2k\leq r+1,\\ 
        S^{2k-2}, & \text{if } 2k>m-r-2. 
    \end{cases}
$$
For each~$2k$, there are exactly~$\binom{m-1}{2k}$ such subsets~$I$. 
For the other two cases, we have
$$ 
    (\Bier\Delta_r^{m-1})_{[m]\cup\{\overline{\nu(\cA)}\}} \simeq \Sigma\Delta_{r-1}^{m-2} 
    \quad \text{and} \quad 
    (\Bier\Delta_r^{m-1})_{\{\nu(\cA)\}\cup[\overline{m}]} \simeq \Sigma\Delta_{m-r-4}^{m-2}
$$
Since the skeleton of a simplex is shellable, it is homotopy equivalent to a wedge of spheres of its top dimension~\cite[Theorems~3.1.3 and~3.1.7]{Wachs2007}.
Comparing reduced Euler characteristics, we obtain
$$
    \Sigma\Delta_{r-1}^{m-2}
    \simeq
    \bigvee^{\binom{m-2}{r}} S^r
    \quad\text{and}\quad
    \Sigma\Delta_{m-r-4}^{m-2}
    \simeq
    \bigvee^{\binom{m-2}{r+1}} S^{m-r-3}.
$$

The preceding computations give the following values for
$$
    \sum_{\omega\in\row\Lambda}
    \widetilde{\beta}_{i}\bigl((\Bier\Delta_r^{m-1})_\omega\bigr).
$$
For~$i=2k-1$, this sum is given by
$$
    \binom{m-1}{2k}
    \cdot\ind_{\{0,1,\ldots,r+1\}}(2k).
$$
For~$i=2k-2$, it is given by
\begin{align*}
   & \binom{m-1}{2k}
    \cdot\ind_{\{m-r-1, \ldots, m-1\}}(2k)+
    \binom{m-2}{r+1}
    \cdot\ind_{\{m-r-3\}}(2k-2)
    +
    \binom{m-2}{r}
    \cdot\ind_{\{r\}}(2k-2) \\
  = & \binom{m-1}{2k}
    \cdot\ind_{\{m-r-1, \ldots, m-1\}}(2k)+
    \binom{m-2}{r+1}
    \cdot\ind_{\{m-r-1\}}(2k)
    +
    \binom{m-2}{r}
    \cdot\ind_{\{r+2\}}(2k).
\end{align*}
By Lemma~\ref{lem:choi-park2017cohomology}, the corollary follows.
\end{proof}

\providecommand{\bysame}{\leavevmode\hbox to3em{\hrulefill}\thinspace}
\providecommand{\MR}{\relax\ifhmode\unskip\space\fi MR }
\providecommand{\MRhref}[2]{%
  \href{http://www.ams.org/mathscinet-getitem?mr=#1}{#2}
}
\providecommand{\href}[2]{#2}


\begin{thebibliography}{10}

\bibitem{Bier1992}
Thomas Bier, \emph{A remark on {A}lexander duality and the disjunct join},
  preprint (1992).

\bibitem{Bjorner-Paffenholz-Sjostrand-Ziegler2005}
Anders Bj\"orner, Andreas Paffenholz, Jonas Sj\"ostrand, and G\"unter~M.
  Ziegler, \emph{Bier spheres and posets}, Discrete Comput. Geom. \textbf{34}
  (2005), no.~1, 71--86. \MR{2140883}

\bibitem{Cai-Chen-Lu2007}
Mingzhong Cai, Xin Chen, and Zhi L{\"u}, \emph{Small covers over prisms},
  Topology Appl. \textbf{154} (2007), no.~11, 2228--2234. \MR{2328006}

\bibitem{Choi2008}
Suyoung Choi, \emph{The number of small covers over cubes}, Algebr. Geom.
  Topol. \textbf{8} (2008), no.~4, 2391--2399. \MR{2465745}

\bibitem{Choi2010}
\bysame, \emph{The number of orientable small covers over cubes}, Proc. Japan
  Acad. Ser. A Math. Sci. \textbf{86} (2010), no.~6, 97--100. \MR{2680832}

\bibitem{Choi-Kaji-Theriault2017}
Suyoung Choi, Shizuo Kaji, and Stephen Theriault, \emph{Homotopy decomposition
  of a suspended real toric space}, Bol. Soc. Mat. Mex. (3) \textbf{23} (2017),
  no.~1, 153--161. \MR{3633130}

\bibitem{Choi-Masuda-Suh2010Quasitoric}
Suyoung Choi, Mikiya Masuda, and Dong~Youp Suh, \emph{Quasitoric manifolds over
  a product of simplices}, Osaka J. Math. \textbf{47} (2010), no.~1, 109--129.
  \MR{2666127}

\bibitem{Choi-Park2017Cohomology}
Suyoung Choi and Hanchul Park, \emph{On the cohomology and their torsion of
  real toric objects}, Forum Math. \textbf{29} (2017), no.~3, 543--553.
  \MR{3641664}

\bibitem{Choi-Park2019SmallCovers}
\bysame, \emph{Small covers over wedges of polygons}, J. Math. Soc. Japan
  \textbf{71} (2019), no.~3, 739--764. \MR{3984241}

\bibitem{Choi-Yoon-Yu2026}
Suyoung Choi, Younghan Yoon, and Seonghyeon Yu, \emph{Full subcomplexes of
  {B}ier spheres}, 2025, preprint, arXiv:2503.05385.

\bibitem{Davis-Januszkiewicz1991}
Michael~W. Davis and Tadeusz Januszkiewicz, \emph{Convex polytopes, {C}oxeter
  orbifolds and torus actions}, Duke Math. J. \textbf{62} (1991), no.~2,
  417--451. \MR{1104531 (92i:52012)}

\bibitem{Garrison-Scott2003}
Anne Garrison and Richard Scott, \emph{Small covers of the dodecahedron and the
  120-cell}, Proc. Amer. Math. Soc. \textbf{131} (2003), no.~3, 963--971.
  \MR{1937435}

\bibitem{Hasui2015}
Sho Hasui, \emph{On the classification of quasitoric manifolds over dual cyclic
  polytopes}, Algebr. Geom. Topol. \textbf{15} (2015), no.~3, 1387--1437.
  \MR{3361140}

\bibitem{Heudtlass-Katthan2012}
Inga Heudtlass and Lukas Katth\"an, \emph{Algebraic properties of {B}ier
  spheres}, Matematiche (Catania) \textbf{67} (2012), no.~1, 91--101.
  \MR{2927822}

\bibitem{Jevtic-Timotijevic-Zivaljevic2021}
Filip~D. Jevti\'c, Marinko Timotijevi\'c, and Rade~T. \v{Z}ivaljevi\'c,
  \emph{Polytopal {B}ier spheres and {K}antorovich-{R}ubinstein polytopes of
  weighted cycles}, Discrete Comput. Geom. \textbf{65} (2021), no.~4,
  1275--1286. \MR{4249903}

\bibitem{Limonchenko-Sergeev2024}
I.~Yu. Limonchenko and M.~A. Sergeev, \emph{Bier spheres and toric topology},
  Proc. Steklov Inst. Math. \textbf{326} (2024), 252--268. \MR{4855361}

\bibitem{Ivan-Marinko-Rade2025}
Ivan Limonchenko, Marinko Timotijevi{\'c}, and Rade {\v{Z}}ivaljevi{\'c},
  \emph{On a class of toric manifolds arising from simplicial complexes},
  (2025), preprint arXiv:2506.13547.

\bibitem{Limonchenko-Vavpeti2024}
Ivan Limonchenko and Aleš Vavpetič, \emph{Chromatic numbers, {B}uchstaber
  numbers and chordality of {B}ier spheres}, 2024, arXiv preprint
  arXiv:2412.20861.

\bibitem{Matousek2003book}
Ji\v{r}\'i Matou\v{s}ek, \emph{Using the {B}orsuk-{U}lam theorem},
  Universitext, Springer-Verlag, Berlin, 2003, Lectures on topological methods
  in combinatorics and geometry, Written in cooperation with Anders Bj\"orner
  and G\"unter M. Ziegler. \MR{1988723}

\bibitem{Murai2011}
Satoshi Murai, \emph{Spheres arising from multicomplexes}, J. Combin. Theory
  Ser. A \textbf{118} (2011), no.~8, 2167--2184. \MR{2834171}

\bibitem{Wachs2007}
Michelle~L. Wachs, \emph{Poset topology: tools and applications}, Geometric
  combinatorics, IAS/Park City Math. Ser., vol.~13, Amer. Math. Soc.,
  Providence, RI, 2007, pp.~497--615. \MR{2383132}

\bibitem{Wang-Chen2012}
Yanying Wang and Yanchang Chen, \emph{Small covers over products of a polygon
  with a simplex}, Turkish J. Math. \textbf{36} (2012), no.~1, 161--172.
  \MR{2881646}

\end{thebibliography}
\end{document}